\DeclareMathAlphabet{\mathpzc}{OT1}{pzc}{m}{it}
\newcolumntype{L}{>{$}l<{$}}
\crefname{hypothesis}{Hypothesis}{Hypotheses}
\title{Uniform asymptotic expansions for the zeros of Bessel functions}
\author{T. M. Dunster\thanks{Department of Mathematics and Statistics, San Diego State University, 5500 Campanile Drive, San Diego, CA 92182-7720, USA. 
  (\email{mdunster@sdsu.edu}, \url{https://tmdunster.sdsu.edu}).}
  }
\newcommand*{\addFileDependency}[1]{
  \typeout{(#1)}
  \@addtofilelist{#1}
  \IfFileExists{#1}{}{\typeout{No file #1.}}
}
\newcommand*{\myexternaldocument}[1]{%
    \externaldocument{#1}%
    \addFileDependency{#1.tex}%
    \addFileDependency{#1.aux}%
}
\begin{document}

\maketitle

\begin{abstract}
Reformulated uniform asymptotic expansions are derived for ordinary differential equations having a large parameter and a simple turning point. These involve Airy functions, but not their derivatives, unlike traditional asymptotic expansions. From these, asymptotic expansions are derived for the zeros of Bessel functions that are valid for large positive values of the order, uniformly for all the zeros. The coefficients in the expansions are explicitly given elementary functions, and similar expansions are derived for the zeros of the derivatives of Bessel functions.
\end{abstract}

\begin{keywords}
{Bessel functions, asymptotic expansions, zeros, turning point theory}
\end{keywords}

\begin{AMS}
  33C10, 34E05, 34E20
\end{AMS}

\section{Introduction}
\label{sec1}

In this paper we obtain uniform asymptotic expansions for zeros of Bessel functions, via the differential equation they satisfy \cite[Eq. 10.2.1]{NIST:DLMF}. These functions have numerous mathematical and physical applications \cite[Sects. 10.72 and 10.73]{NIST:DLMF}, and their zeros also have many applications, such as in wave propagation, scattering theory and quantum mechanics \cite{Dunster:2006:RLC}, \cite{Elizalde:1993:SRZ}, \cite{Ferreira:2008:ZMF}, \cite{Liu:2007:ZBS}, \cite{Parnes:1972:CZM}.

Let us recap existing asymptotic expansions for the $m$th positive zero $j_{\nu,m}$ ($m=1,2,3,\ldots$) of the Bessel function of the first kind $J_{\nu}(z)$. Similar results to those below hold for other solutions of  Bessel's equation such as the Bessel function of the second kind $Y_{\nu}(z)$ and the Hankel functions $H_{\nu}^{(1)}(z)$ and $H_{\nu}^{(2)}(z)$, as well as their derivatives.

Firstly, the simplest are the McMahon’s asymptotic expansions for the later zeros \cite[Eq. 10.21.19]{NIST:DLMF}, where for bounded $\nu \geq 0$
\begin{equation}
\label{eq01}
j_{\nu,m} \sim
\left(m +\frac{1}{2}\nu -\frac{1}{4}\right)\pi
+\sum_{s=0}^{\infty}
\frac{p_{s}(\nu)}{\left\{\left(m +\frac{1}{2}\nu 
-\frac{1}{4}\right)\pi\right\}^{2s+1}}
\quad   (m \to \infty).
\end{equation}
Nemes \cite{Nemes:2021:PTC} proved that $p_{s}(\nu)$ is a polynomial in $\nu$ of order $2s$, and moreover for $\nu \in (-1/2,1/2)$ the $n$th error term does not exceed the first neglected term in absolute value and has the same sign as that term.

Next from \cite[Eq. 10.21.32]{NIST:DLMF} for bounded $m$
\begin{equation}
\label{eq02}
j_{\nu,m} \sim
\nu \sum_{s=0}^{\infty}
\frac{\alpha_{s}}{\nu^{2s/3}}
\quad (\nu \to \infty),
\end{equation}
where $\alpha_{s}$ are polynomials involving $\mathrm{a}_{m}$, the $m$th negative zero of $\mathrm{Ai}(x)$ ordered by increasing absolute values. Qu and Wong \cite{Qu:1999:BPU} showed that each zero is bounded below by the first two terms, and bounded above by the first three terms.

The paper of Olver \cite{Olver:1954:AEB} was a major contribution to the study of the \emph{uniform} asymptotic behavior of Bessel functions of large order, including in the later sections the real and complex zeros of Bessel functions. Using the now standard expansions for ODEs having a simple turning point in the complex plane, he derived asymptotic expansions for Bessel functions that involve Airy functions and their derivatives, and in particular the expansions (given with error bounds) in \cite[Chap. 11, Eq. (10.18)]{Olver:1997:ASF}. With the one for $J_{\nu}(z)$, which is valid for unbounded complex $z$ including the positive axis where the real zeros are located, in \cite[\S 7]{Olver:1954:AEB} he proved the existence of a uniform asymptotic expansion of the form
\begin{equation}
\label{eq03}
j_{\nu,m} \sim
\nu \sum_{s=0}^{\infty}
\frac{z_{m,s}}{\nu^{2s}}
\quad (\nu \to \infty, \, m=1,2,3,\ldots).
\end{equation}
Due to the dual uniformity of the parent Airy expansions, that is being valid for either one or both of the argument and order large, the expansion (\ref{eq03}) is also asymptotically valid for $m \to \infty$ with $\nu > 0$ fixed. Thus we we can regard (\ref{eq01}) and (\ref{eq02}) as special cases (with the exception $\nu =0$).

For each $m$ the coefficients $z_{m,s}$ depend on the parameter
\begin{equation}
\label{eq04}
\zeta_{m,0} = \nu^{-2/3} \mathrm{a}_{m}
\in (-\infty,0).
\end{equation}
The first term $z_{m,0}$ is readily computable via an implicit equation, but the subsequent coefficients are significantly more complicated than those in (\ref{eq01}) and (\ref{eq02}), and not given explicitly in his paper or elsewhere. For this reason it does not appear that this uniform asymptotic expansion has been used in the literature for approximating the zeros, despite it being the most powerful one in principle. Olver showed that similar expansions to (\ref{eq03}) hold for the zeros of the other Bessel functions (real and complex), as well as for their derivatives. 

His method used Taylor series expansions, which involved repeated differentiation of the coefficients in the Airy expansions described above. The difficulty in using these coefficients is that they involve nested integrations, and are hard to evaluate beyond the first few. Also the presence of the derivative of the Airy function further complicates matters.

The expansion (\ref{eq03}), and related ones for other solutions of Bessel's equation, is the main focus of this paper, with the aim to make (\ref{eq03}) easily computable, thus providing a means to compute all the zeros to high accuracy. Here we shall derive simple and explicit expressions for $z_{m,s}$, as described below, using a method that is widely applicable to other special functions.

We achieve this in two steps. Firstly in \cref{sec2} we examine general ordinary differential equations having a turning point. We shall obtain Cherry type expansions (see \cite{Cherry:1950:UAF}), where there is no derivative of the Airy approximant, and instead an asymptotic expansion occurs inside the argument of a single Airy function. As we shall demonstrate, this makes it significantly easier to derive asymptotic expansions for the zeros. Error bounds in the real variable case for expansions of Cherry type were later given by the present author in \cite{Dunster:2014:OEB}. However, the coefficients in both these papers are as hard to compute as in the standard form \cite[Chap. 11]{Olver:1997:ASF}. 

Instead, in the present paper we employ the exponential form of Liouville Green (LG) expansions as given in \cite{Dunster:2020:LGE}, and as a result our new Airy function expansions have coefficients that are readily computable, since they do not require nested integration. The method of employing these LG expansions bears similarities to \cite{Dunster:2017:COA}, where uniform asymptotic expansions of the form involving the Airy function and its derivative were obtained where the coefficients are also easier to compute than the traditional forms described above. In addition the error bounds for this approach \cite{Dunster:2021:SEB} are significantly simpler in form than given previously \cite[Chap. 11]{Olver:1997:ASF}.

We then apply the new general theory of \cref{sec2} to Bessel's equation in \cref{sec3}, which allows us to asymptotically solve for the zeros with an expansion of the form (\ref{eq03}), but now obtaining an explicit recursion relation for the coefficients. We further show that they are elementary rational functions of three readily computed variables. We also have the advantage of symbolic algebra to evaluate these coefficients, in our case Maple\footnote{Maple 2020.2. Maplesoft, a division of Waterloo Maple Inc., Waterloo, Ontario.}. In \cref{sec4} we obtain similar expansions for the zeros of the derivatives of Bessel functions.

The only restriction in our method is that the number of terms in the coefficients $z_{m,s}$ grows exponentially when given explicitly in terms of just the three variables, as we show in a Maple worksheet in which the first four terms are recorded (\cref{secA}). More terms than we give there can readily be evaluated by our method, and to avoid overflow they can be stored in a more general form (see (\ref{eq94}) - (\ref{eq97}) below), with the explicit dependence on the three variables stored in intermediary terms appearing in these expressions. This keeps the size more manageable, and allows computation of the zeros to a very high degree of accuracy, uniformly for $m=1,2,3,\ldots$. We illustrate the uniformity of our expansions with some numerical examples in \cref{sec5}.

Our method is quite general, and can be used to study the zeros of a wide range of special functions that are solutions of an ODE having a turning point. Moreover, in a subsequent paper we shall derive explicit error bounds when the series (\ref{eq03}) is truncated after three terms, and this should also be possible with the zeros of other functions. On the other hand, deriving error bounds for (\ref{eq03}) does not seem feasible using Olver's method.

\section{Airy function expansions in Cherry form}
\label{sec2}

Here we consider linear second order differential equations in the standard normalized form
\begin{equation}
\label{eq05}
d^{2}w/dz^{2}=\left\{u^{2}f(z) +g(z)\right\} w,
\end{equation}
where $u$ is a large positive parameter, and $z$ lies in a complex domain which may be unbounded. Extending our results to complex $u$ would not be a major task, but we assume $u$ real throughout this paper. 

Many special functions satisfy equations of the form (\ref{eq05}). The functions $f(z)$ and $g(z)$ are assumed to be meromorphic in a certain domain $Z$ (defined by shortly), and for ease of notation that are assumed to be independent of $u$ (this condition can easily be relaxed). We further assume $f(z)$ has a simple zero at $z=z_{0}$, which is the turning point of the equation.

Following \cite[Chap. 11]{Olver:1997:ASF} let $\xi$ and $\zeta$ be defined by
\begin{equation}
\label{eq06}
\xi=\frac{2}{3}\zeta ^{3/2}
= \int_{z_{0}}^{z} f^{1/2}(t) dt.
\end{equation}
Both variables obviously vanish at the turning point $z=z_{0}$, and $\zeta$ is an analytic function of $z$ at the this point, whereas $\xi$ has a branch point there. For convenience we choose the branch so that $\xi$ is  positive when $z \to z_{0}$ through values in which $\zeta$ is positive, with $\xi$ defined by continuity elsewhere.

For $j=0,\pm 1$ we denote suitable reference points in the $\zeta$ plane at infinity by $\zeta = \zeta^{(0)}$, where $\Re(\zeta^{(0)})= +\infty$, and $\Re (\zeta^{(\pm 1)} e^{\mp 2\pi i/3}) = +\infty$. We also denote $z=z^{(j)}$ that correspond to these, which are singularities of (\ref{eq05}), not necessarily at infinity. Let $\xi^{(j)}$ correspond to $\zeta^{(j)}$, and note that $\Re(\xi^{(0)})=+\infty$ and $\Re(\xi^{(\pm 1)})=-\infty$.

Next, let $Z$ be the $z$ domain containing $z=z_{0}$ in which $f(z)$ has no other zeros, and in which $f(z)$ and $g(z)$ are meromorphic, with poles (if any) at finite points $z=w_{j}$ ($j=1,2,3,\ldots $) such that at these singularities

(i) $f(z)$ has a pole of order $m>2$, and $g(z)$ is analytic or has a pole of order less than $\frac{1}{2}m+1$, or

(ii) $f(z)$ and $g(z)$ have a double pole, and 
$\left( z-w_{j}\right) ^{2}{g(z) \rightarrow -}\frac{1}{4}$ as $z\rightarrow w_{j}$.

We call these \emph{admissible poles}. The points $z^{(j) }$ must lie in $Z$. Further, these must be either at an admissible pole, or at infinity if $f(z)$ and $g(z)$ can be expanded in convergent series in a neighborhood of $z=\infty $ of the form
\begin{equation}
f(z)=z^{p}\sum\limits_{s=0}^{\infty }f_{s}z^{-s},\ g(z)=z^{q}\sum\limits_{s=0}^{\infty }g_{s}{z}^{-s},
\label{fginfinity}
\end{equation}
where $f_{0}\neq 0$, $g_{0}\neq 0$, and either $p$ and $q$ are integers such that $p>-2$ and $q<\frac{1}{2}p-1$, or $p=q=-2$ and $g_{0}=-\frac{1}{4}$. For details and generalizations of (\ref{fginfinity}) see \cite[Chap. 10, \S\S 4 and 5]{Olver:1997:ASF}.

We consider three numerically satisfactory solutions of the ODE (\ref{eq05}) denoted by $W_{j}(u,z)$ ($j=0,\pm 1$), these being recessive at $z=z^{(j)}$. As $u \to \infty$ these solutions possess the following Liouville-Green (LG) asymptotic expansions in exponential form
\begin{multline}
\label{eq11}
W_{0}(u,z) 
= \frac{1}{f^{1/4}(z)}
\exp \left\{ -u\xi
+\sum\limits_{s=1}^{N-1}{(-1) ^{s}
\frac{\hat{E}_{s}\left(
z\right) -\hat{E}_{s}
\left( {z^{(0) }}\right) }{u^{s}}}\right\} 
\\
\times \left\{1+\mathcal{O}\left(
\frac{1}{u^{N}}\right)\right\}
\quad (z \in Z_{0})
\end{multline}
and
\begin{multline}
\label{eq12}
W_{\pm 1}(u,z)
= \frac{1}{f^{1/4}(z)}
\exp \left\{ u\xi
+\sum\limits_{s=1}^{N-1}\frac{\hat{E}_{s}(z) 
-\hat{E}_{s}\left( {z^{(\pm 1) }}\right) }
{u^{s}}\right\} 
\\
\times \left\{1+\mathcal{O}\left(
\frac{1}{u^{N}}\right)\right\}
\quad (z \in Z_{\pm 1}).
\end{multline}
where $N \geq 2$ is an arbitrary integer. 

The coefficients $\hat{E}_{s}(z)$, are described in \cite[Eqs. (1.6), (1.12) - (1.15)]{Dunster:2021:SEB}, and bounds for the $\mathcal{O}$ terms are furnished by \cite[Thm. 2.1]{Dunster:2021:SEB}. For $j=0,\pm 1$ each LG region of validity $Z_{j} \subset Z$ contains the point $z^{(j)}$, and comprises the $z$ point set for which there is a \textit{progressive} path $L_{j}$ (say) linking $z$ with $z^{(j) }$ in $Z$ and having the properties (i) $L_{j} $ consists of a finite chain of $R_{2}$ arcs (as defined in \cite[Chap. 5, \S 3.3]{Olver:1997:ASF}), and (ii) as $v$ passes along $L_{j} $ from $z^{(j) }$ to $z$, the real part of $(-1)^{j}\xi(v)$ is nonincreasing,
where $\xi(v)$ is given by (\ref{eq06}) with $z=v$, and with the chosen sign fixed throughout.

An important property of the odd coefficients is that with suitable choice of integration constants each of $(z-z_{0})^{1/2}\hat{E}_{2s+1}(z)$ ($s=0,1,2,\ldots$) is meromorphic at $z=z_{0}$. We also emphasize that evaluation of these odd coefficients involves at most one integration of explicit functions, and the even ones do not require any integration (see \cite[Eq. (1.15)]{Dunster:2021:SEB}). This is in contrast to LG expansions in the more standard form, where the asymptotic series lies outside the exponential function; see \cite[Chap. 10]{Olver:1997:ASF}.

The three solutions are linearly dependent, and it is convenient to write their linear relationship in the form
\begin{equation}
\label{eq13}
iW_{0}(u,z)
=\lambda_{-1}(u)W_{-1}(u,z)
-\lambda_{1}(u)W_{1}(u,z).
\end{equation}
Usually the coefficients $\lambda_{\pm 1}(u)$ can be determined from the special functions in question, or if not, asymptotically via \cite[Lemma 2.3]{Dunster:2021:SEB}
\begin{equation}
\label{eq26a}
\lambda_{\pm 1}(u)
\sim   \exp \left\{ 
\sum\limits_{s=1}^{\infty}{
\frac{\hat{E}_{s}\left( {z^{(\pm 1) }}\right)
+(-1)^{s+1}\hat{E}_{s}
\left( {z^{(0) }}\right) }{u^{s}}}\right\} 
\quad (u \to \infty).
\end{equation}

Incidentally, there is an error in Eqs. (2.10) and (2.13) of that Lemma, namely $j=0,\pm 1$ should be replaced by $j=\pm 1$ in the former, and $\{j,k\} \in \{0,1,-1\}$ should be replaced by $\{j,k\} \in \{1,-1\}$ in the latter. The identities do not hold for $j,k=0$, and therefore the first sentence of the proof should also be discarded. This is a minor consideration, since $j,k=\pm 1$ is the non-trivial case, with $j,k=0$ of no interest.

The three LG expansions break down at the turning point $z_{0}$, and in order to obtain representations valid at this point the standard method involves the Airy function $\mathrm{Ai}(z)$ and its derivative \cite[Chap. 11]{Olver:1997:ASF}, along with two sets of asymptotic series involving inverse powers of the large parameter $u$. From \cite[Eqs.(3.1) - (3.3)]{Dunster:2021:SEB} alternative asymptotic forms for solutions are given in terms of slowly-varying coefficient functions $A(u,z)$ and $B(u,z)$ via the identities
\begin{equation}
\label{eq17}
\frac{1}{2\pi ^{1/2}u^{1/6}}W_{0}(u,z) 
=\mathrm{Ai}\left( {u^{2/3}\zeta }\right) A(u,z) 
+\mathrm{Ai}^{\prime }
\left( {u^{2/3}\zeta }\right) B(u,z),
\end{equation}
and 
\begin{equation}
\label{eq15}
\frac{e^{\pm \pi i/6}\lambda_{\pm 1}(u)}
{2\pi ^{1/2}u^{1/6}}W_{\pm 1}(u,z)
=\mathrm{Ai}_{\pm 1}\left( {u^{2/3}\zeta }\right) A(u,z)
+\mathrm{Ai}_{\pm 1}^{\prime }
\left( {u^{2/3}\zeta }\right) B(u,z),
\end{equation}
where $\mathrm{Ai}_{\pm 1}(z)=\mathrm{Ai}(ze^{\mp 2\pi i/3})$. An important feature is that these equations are linearly dependent, in that given any pair of these three identities the other one follows from (\ref{eq13}) and the Airy function connection formula \cite[Eq. 9.2.12]{NIST:DLMF}
\begin{equation}
\label{eq16}
i\mathrm{Ai}\left( {u^{2/3}\zeta }\right) 
=e^{\pi i/6}\mathrm{Ai}_{-1}\left( {u^{2/3}\zeta }
\right)
-e^{-\pi i/6}\mathrm{Ai}_{1}
\left( {u^{2/3}\zeta }\right).
\end{equation}
Indeed this is the main reason for the choice of constants multiplying $W_{j}(u,z)$ in the definitions (\ref{eq17}) and (\ref{eq15}), with the common factor $2 \pi ^{1/2}u^{1/6}$ in the denominators simply being a normalizing factor so that $A(u,z) \sim 1$ as $u \to \infty$ when $z \in Z_{j}$ ($j=0,\pm 1$).

The functions $A(u,z)$ and $B(u,z)$ can then be determined explicitly by solving simultaneously any pair of the three equations (\ref{eq17}) and (\ref{eq15}). One finds they are analytic in a domain containing the turning point $z_{0}$ (see \cite[\S 2]{Dunster:2021:SEB}). Most significantly, for large $u$ these two functions possess the readily computable asymptotic expansions given by \cite[Eqs. (1.20) and (1.21)]{Dunster:2021:SEB}, and these are valid for $z \in \mathbf{Z}$ where
\begin{equation}
\label{eq16a}
\mathbf{Z}=\left\{Z_{0} \cap Z_{1}\right\}
\cup \left\{Z_{0} \cap Z_{-1}\right\}
\cup \left\{Z_{1} \cap Z_{-1}\right\}.
\end{equation}
Recall that $Z_{j}$ ($j=0,\pm 1$) are the domains of validity of the LG expansions (\ref{eq11}) and (\ref{eq12}), and are described in the paragraph proceeding those equations.

Note this region surrounds the turning point $z_{0}$, but does not include it since the LG expansions break down at that point. However, asymptotic expansions for $A(u,z)$ and $B(u,z)$ can be computed in a neighborhood containing the turning point via Cauchy's integral formula (\cite[Thm. 4.2]{Dunster:2021:SEB}).

For the same functions defined by the unique asymptotic behavior (\ref{eq11}) and (\ref{eq12}), we follow \cite{Cherry:1950:UAF} and \cite{Dunster:2014:OEB} and seek solutions that only involve one Airy function and not its derivative. Specifically, in place of (\ref{eq17}) and (\ref{eq15}) we instead \emph{define} functions $\mathcal{Y}(u,z)$ and $\mathcal{Z}(u,z)$ via the pair of equations
\begin{equation}
\label{eq18}
e^{\pm \pi i/6}\lambda_{\pm 1}(u)W_{\pm 1}(u,z)
=c(u)\mathcal{Y}(u,z)
\mathrm{Ai}_{\pm 1}
\left( u^{2/3}\mathcal{Z}(u,z)\right),
\end{equation}
where $c(u) \neq 0$ is arbitrary. Then using the connection formulas (\ref{eq13}) and (\ref{eq16}) we get
\begin{equation}
\label{eq19}
W_{0}(u,z)
=c(u)\mathcal{Y}(u,z) \mathrm{Ai}
\left( u^{2/3}\mathcal{Z}(u,z)\right),
\end{equation}
which as we shall see renders $c(u)\mathcal{Y}(u,z)$ and $\mathcal{Z}(u,z)$ slowly varying as functions of large $u$ in a full neighborhood of $z=z_{0}$ (similarly to $A(u,z)$ and $B(u,z)$). We also demonstrate that they are analytic at this point.

From \cite{Dunster:2014:OEB} one expects that as $u \to \infty$
\begin{equation}
\label{eq32}
\mathcal{Z}(u,z) \sim \zeta
+\sum_{s=1}^{\infty}
\frac{\Upsilon_{s}(z)}{u^{2s}},
\end{equation}
uniformly for $z \in \mathbf{Z} \cup \{z_{0}\}$, where $\Upsilon_{s}(z)$ ($s=1,2,3,\ldots$) are analytic in this region. We show that this is indeed true, and while obtaining simpler expressions for the coefficients $\Upsilon_{s}(z)$ than the ones given in \cite{Dunster:2014:OEB}. To this end, on eliminating $\mathcal{Y}(u,z)$ we have the following three implicit equations for $\mathcal{Z}(u,z)$
\begin{equation}
\label{eq20}
\frac{e^{\pm \pi i/6}\lambda_{\pm 1}(u)
W_{\pm 1}(u,z)}
{W_{0}(u,z)}
=\frac{\mathrm{Ai}_{\pm 1}
\left( u^{2/3}\mathcal{Z}(u,z)\right)}
{\mathrm{Ai}\left( u^{2/3}\mathcal{Z}(u,z)\right)},
\end{equation}
and 
\begin{equation}
\label{eq21}
\frac{e^{-\pi i/3}
\lambda_{-1}(u) W_{-1}(u,z)}
{\lambda_{1}(u) W_{1}(u,z)}
=\frac{\mathrm{Ai}_{-1}
\left( u^{2/3}\mathcal{Z}(u,z)\right)}
{\mathrm{Ai}_{1}\left( u^{2/3}\mathcal{Z}(u,z)\right)}.
\end{equation}

We shall select one of these three relations in turn to derive an asymptotic expansion for $\mathcal{Z}(u,z)$ for $z \in \mathbf{Z}$, the choice of which depends on the region where, by linear independence of any given pair, one and only one of the functions $W_{j}(u,z)$ ($j=0,\pm 1$) appearing in the LHS of (\ref{eq20}) or (\ref{eq21}) is recessive. Note that for each point in $\mathbf{Z}$ there are two such choices.

Before we proceed on this, we prove the following important result.

\begin{lemma}
\label{lemma01}
$\mathcal{Z}(u,z)$ is analytic at points, which includes the turning point $z=z_{0}$, where all three of $W_{j}(u,z)$ ($j=0,\pm 1$) are analytic.
\end{lemma}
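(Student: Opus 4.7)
The idea is to view equations (\ref{eq18}) with $j=\pm 1$ as an implicit system for the two unknowns $\mathcal{Y}(u,z)$ and $\mathcal{Z}(u,z)$ and invoke the holomorphic implicit function theorem. Setting $\tilde{\mathcal{Y}}=c(u)\mathcal{Y}$ for brevity, I would rewrite (\ref{eq18}) as
\begin{equation*}
\Phi(\tilde{\mathcal{Y}},\mathcal{Z})
= \bigl(\tilde{\mathcal{Y}}\,\mathrm{Ai}_{1}(u^{2/3}\mathcal{Z}),\ \tilde{\mathcal{Y}}\,\mathrm{Ai}_{-1}(u^{2/3}\mathcal{Z})\bigr)
= \bigl(e^{\pi i/6}\lambda_{1}W_{1}(u,z),\ e^{-\pi i/6}\lambda_{-1}W_{-1}(u,z)\bigr).
\end{equation*}
A short computation gives the Jacobian determinant
\begin{equation*}
\det J_{\Phi} = \tilde{\mathcal{Y}}\,u^{2/3}\bigl[\mathrm{Ai}_{1}\,\mathrm{Ai}_{-1}' - \mathrm{Ai}_{1}'\,\mathrm{Ai}_{-1}\bigr](u^{2/3}\mathcal{Z}),
\end{equation*}
and the bracketed expression is the Wronskian of the two linearly independent Airy solutions $\mathrm{Ai}_{\pm 1}$, hence a non-zero constant. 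Therefore $\det J_{\Phi}\neq 0$ if and only if $\tilde{\mathcal{Y}}\neq 0$.

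The main remaining point is to verify that $\tilde{\mathcal{Y}}(u,z_{*})\neq 0$ at any point $z_{*}$ where $W_{0},W_{\pm 1}$ are all analytic. Indeed, if $\tilde{\mathcal{Y}}(u,z_{*})=0$ then (\ref{eq18}) forces $W_{1}(z_{*})=W_{-1}(z_{*})=0$. But $W_{1}$ and $W_{-1}$ are linearly independent solutions of the second-order equation (\ref{eq05}), so by Abel's identity their Wronskian never vanishes on a connected set where both are analytic; in particular they cannot have a common zero. This contradiction yields $\tilde{\mathcal{Y}}(u,z_{*})\neq 0$, so the Jacobian is invertible at $z_{*}$.

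With invertibility in hand, the holomorphic implicit function theorem presents $(\tilde{\mathcal{Y}},\mathcal{Z})$ as an analytic function of $(e^{\pi i/6}\lambda_{1}W_{1},e^{-\pi i/6}\lambda_{-1}W_{-1})$ in a neighborhood of $z_{*}$, and since these two $W$-functions are analytic at $z_{*}$ by hypothesis, $\mathcal{Z}(u,z)$ inherits analyticity there. The step I expect to require the most care is the verification that the \emph{specific} local solution branch of $\Phi^{-1}$ selected by the IFT is the same branch used to define $\mathcal{Z}(u,z)$ globally. This is handled by noting that $\mathcal{Z}(u,z)$ is already well-defined (e.g.\ by the asymptotic expansion (\ref{eq32})) on the neighborhood $\mathbf{Z}$ of the turning point, and then analytically continuing: the obstruction to single-valued continuation would be a branch point of the inverse, which by the Jacobian computation occurs only where $\tilde{\mathcal{Y}}=0$, and that has just been ruled out under the hypotheses of the lemma.
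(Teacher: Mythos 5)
Your proposal is correct, but it takes a different route from the paper. The paper first eliminates $\mathcal{Y}(u,z)$ and works with a single scalar relation: at the point in question it picks a pair $j\neq k$ with $W_{j}$ and $W_{k}$ both nonzero there (possible because no two of the pairwise independent solutions can share a zero), notes that the corresponding ratio $\mathrm{Ai}_{j}(t)/\mathrm{Ai}_{k}(t)$ has nowhere-vanishing derivative by the Wronskian identity (\ref{eq22}), and then cites \cite[Thm. 2.1]{Fabijonas:1999:ORA} to invert the one-variable relation (\ref{eq20}) or (\ref{eq21}) analytically. You instead keep both unknowns, treat (\ref{eq18}) as a two-dimensional holomorphic system, and compute $\det J_{\Phi}=c(u)\mathcal{Y}\,u^{2/3}\,\mathcal{W}\{\mathrm{Ai}_{1},\mathrm{Ai}_{-1}\}$, reducing everything to $\mathcal{Y}\neq 0$, which you get from the constancy of the Wronskian of $W_{1},W_{-1}$ (no common zero). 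Both arguments pivot on the nonvanishing Airy Wronskian; yours avoids the paper's case-by-case choice of the pair $(j,k)$ and yields analyticity of $\mathcal{Y}(u,z)$ as a by-product, while the paper's scalar inversion is more economical and leans on a ready-made citation. One caution: your closing branch-matching argument is the loosest step, since failure of continuation could in principle also occur through blow-up of $\mathcal{Z}$ or $\mathcal{Y}$ rather than a branch point, so strictly it presupposes (as the paper implicitly does) that $\mathcal{Z}(u,z)$ is already defined and finite at the point, with the lemma upgrading this to analyticity; at that level of rigor your proof matches the paper's.
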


\begin{proof}
At such a point choose $j,k \in \{0,\pm 1\}$, $j \neq k$, such that neither $W_{k}(u,z)$ nor $W_{j}(u,z)$ is zero at the point, and then consider the one of (\ref{eq20}) or (\ref{eq21}) that corresponds to these values of $j$ and $k$. The corresponding Airy functions also then do not vanish at this point. Analyticity of $\mathcal{Z}(u,z)$ at the point in question is then a consequence of applying the inverse function theorem for analytic functions (see for example \cite[\S 6.22]{Copson:1935:ITF} and \cite[Thm. 2.1]{Fabijonas:1999:ORA}), noting that for $j=0,\pm 1$ $\mathrm{Ai}_{j}(t)$ are entire, and for $j,k=0,\pm 1$, $j \neq k$
\begin{equation}
\label{eq22}
\frac{d}{dt} \left\{
\frac{\mathrm{Ai}_{j}(t)}
{\mathrm{Ai}_{k}(t)}\right\}
=-\frac{ \mathscr{W}\left\{\mathrm{Ai}_{j}(t)
\mathrm{Ai}_{k}(t)\right\}}{\mathrm{Ai}_{k}^{2}(t)}
\neq 0;
\end{equation}
see \cite[\S 9.2(iv)]{NIST:DLMF}.

Finally $z_{0}$ is an ordinary point of the ODE (\ref{eq05}), and as such its solutions $W_{j}(u,z)$ are analytic at this point.
\end{proof}

Next let $z \in \mathbf{Z}$ such that $|\arg(\mathcal{Z}) |\leq \pi -\delta$ ($\delta>0$). Then from \cite[Eq. (A.3)]{Dunster:2017:COA} 
\begin{equation}
\label{eq25}
\mathrm{Ai}\left( u^{2/3}\mathcal{Z}\right) 
\sim \frac{1}{2\pi ^{1/2}u^{1/6}\mathcal{Z}
^{1/4}}\exp \left\{ -u \mathcal{X}
+\sum\limits_{s=1}^{\infty}{(-1) ^{s}
\frac{a_{s}}{s(u \mathcal{X})^{s}}}\right\}
\quad (u \mathcal{X} \to \infty),
\end{equation}
where
\begin{equation}
\label{eq28}
\mathcal{X} =\mathcal{X}(u,z) =
\tfrac{2}{3}\mathcal{Z}^{3/2}(u,z),
\end{equation}
and $\{a_{s}\}_{s=1}^{\infty}$ is the sequence of rational numbers given recursively by
\begin{equation}  
\label{eq24}
a_{1}=a_{2}=\frac{5}{72}, \quad
a_{s+1}=\frac{1}{2}\left(s+1\right) a_{s}
+\frac{1}{2}
\sum\limits_{j=1}^{s-1}{a_{j}a_{s-j}} 
\quad (s=2,3,4,\ldots).
\end{equation}
Hence for $|\arg(\mathcal{Z} e^{\mp 2\pi i/3})| \leq \pi -\delta$ ($\delta>0$)
\begin{equation}
\label{eq26}
\mathrm{Ai}_{\pm 1}\left( {u^{2/3}\mathcal{Z} }\right) 
\sim  \frac{e^{\pm \pi i/6}}{2\pi^{1/2}u^{1/6}
\mathcal{Z}^{1/4}}\exp \left\{ u\mathcal{X}
+\sum\limits_{s=1}^{\infty}\frac{a_{s}}
{s (u \mathcal{X})^{s}}\right\}
\quad (u \mathcal{X} \to \infty).
\end{equation}

Consider the RHS of (\ref{eq20}), and let $z \in \mathbf{Z}$ such that $|\arg(\mathcal{Z} \exp(\mp \frac13 \pi i)) |\leq \frac23\pi -\delta$ ($\delta>0$). Then from (\ref{eq25}) and (\ref{eq26}) it follows that as $|u \mathcal{X}| \to \infty$
\begin{equation}
\label{eq27a}
\frac{\mathrm{Ai}_{\pm 1}
\left( u^{2/3}\mathcal{Z}(u,z)\right)}
{\mathrm{Ai}\left( u^{2/3}\mathcal{Z}(u,z)\right)}
\sim  
\exp \left\{2 u\mathcal{X}(u,z)
+2\sum\limits_{s=0}^{\infty}\frac{
a_{2s+1}}{(2s+1) \left\{u 
\mathcal{X}(u,z)\right\}^{2s+1}}\right\}.
\end{equation}

Now this is either exponentially small or exponentially large, according to the sign of $\Re\{\mathcal{X}(u,z)\}$, or oscillatory if $\Re\{\mathcal{X}(u,z)\}=0$. It follows that the same is true for the LHS of (\ref{eq20}) for our chosen values of $z$, and hence for the upper sign case one of $W_{1}(u,z)$ and $W_{0}(u,z)$ must be recessive (with the other obviously then dominant). We can discard the possibility of both being dominant, for otherwise their ratio would approach a finite limit.

The same is true for $W_{-1}(u,z)$ and $W_{0}(u,z)$ in the lower sign case. Since $z \in \mathbf{Z}$ the former corresponds to $z \in Z_{0} \cap Z_{1}$ and the latter holds for $z \in Z_{0} \cap Z_{-1}$. We have thus proven that for $z \in Z_{0} \cap Z_{1}$ or $z \in Z_{0} \cap Z_{-1}$, on invoking (\ref{eq11}), (\ref{eq12}), (\ref{eq26a}), (\ref{eq20}) and (\ref{eq27a}), that
\begin{multline}
\label{eq27}
\exp \left\{ 2 u\xi
+2\sum\limits_{s=0}^{\infty}
\frac{\hat{E}_{2s+1}(z)}
{u^{2s+1}}\right\} \\
\sim  
\exp \left\{2 u\mathcal{X}(u,z)
+2\sum\limits_{s=0}^{\infty}\frac{
a_{2s+1}}{(2s+1) \left\{u 
\mathcal{X}(u,z)\right\}^{2s+1}}\right\}
\quad (u\mathcal{X} \to \infty).
\end{multline}

It remains to consider $z \in Z_{1} \cap Z_{-1}$. For this case we use (\ref{eq12}), (\ref{eq21}) and (\ref{eq26}), but in these we must take into account that the branch of $\xi$ for the asymptotic expansions of $W_{1}(u,z)$ is of opposite sign than those for $W_{-1}(u,z)$, and similarly for $\mathcal{X}$ for the expansion of $\mathrm{Ai}_{1}(u^{2/3}\mathcal{Z})$. For example, suppose $\frac{2}{3}\pi\leq \arg(\mathcal{Z}) \leq \pi$ ($\pi  \leq \arg(\mathcal{X}) \leq \frac{3}{2}\pi$). Then (\ref{eq26}) holds for $\mathrm{Ai}_{ 1}(u^{2/3}\mathcal{Z})$ but not for $\mathrm{Ai}_{-1}(u^{2/3}\mathcal{X})$.

To obtain the corresponding expansion for $\mathrm{Ai}_{-1}(u^{2/3}\mathcal{Z} )$ we can replace $\mathcal{Z}$ with $\mathcal{Z} e^{-2\pi i}$ in (\ref{eq26}), and consequently $\mathcal{X}$ by $-\mathcal{X}$. Then as $u\mathcal{X} \to \infty$ we have for (at least) $\frac{2}{3}\pi \leq \arg(\mathcal{Z}) \leq \pi$
\begin{equation}
\label{eq29}
\mathrm{Ai}_{-1}\left(u^{2/3}\mathcal{Z}\right) 
\sim 
\frac{i e^{-\pi i/6}}{2\pi^{1/2}u^{1/6}\mathcal{Z}^{1/4}}
\exp \left\{ -u\mathcal{X}
+\sum\limits_{s=1}^{\infty}{(-1) ^{s}
\frac{a_{s}}{s (u\mathcal{X})^{s}}}\right\} 
\quad (u\mathcal{X} \to \infty).
\end{equation}

Since $\Re(\mathcal{X}) \leq 0$ in the sector under consideration, it should be noted that $\mathrm{Ai}_{1}(u^{2/3}\mathcal{Z})$ is recessive and $\mathrm{Ai}_{-1}(u^{2/3}\mathcal{Z} )$ is dominant as $u^{2/3}\mathcal{Z} \to \infty$.

We remark that (\ref{eq29}) could also be deduced from (\ref{eq16}), (\ref{eq25}) and (\ref{eq26}) (with upper signs taken), but using these formulas it only could be shown to be true for $\frac{2}{3}\pi \leq \arg(\mathcal{Z}) \leq \pi- \delta $ ($\delta > 0$), since the expansion (\ref{eq25}) breaks down at $\arg(\mathcal{Z}) = \pi$ (where its zeros are located).

Similarly, for $z$ lying in the subdomain of $Z_{1} \cap Z_{-1}$ where $\pi \leq \arg(\xi) \leq \frac{3}{2}\pi$ we find in place of (\ref{eq12}) (cf. the proof of \cite[Lemma 2.3]{Dunster:2021:SEB})
\begin{equation}
\label{eq12a}
W_{-1}(u,z) \sim
\frac{i}{f^{1/4}(z)}
\exp \left\{ -u\xi
+\sum\limits_{s=1}^{\infty}\frac{(-1)^{s}\hat{E}_{s}(z) 
-\hat{E}_{s}\left( {z^{(-1) }}\right) }
{u^{s}}\right\}
\quad  (u\xi \to \infty),
\end{equation}
with (\ref{eq12}) still holding for $W_{1}(u,z)$. Note in (\ref{eq12a}) we have $\Re(\xi) \leq 0$ and hence this function is dominant in the region in question, and from (\ref{eq12}) $W_{1}(u,z)$ is recessive.

Then from (\ref{eq12}), (\ref{eq26a}), (\ref{eq21}), (\ref{eq26}), (\ref{eq29}) and (\ref{eq12a}) we have that (\ref{eq27}) holds for $z \in Z_{1} \cap Z_{-1}$ such that $\pi \leq \arg(\xi) \leq \frac{3}{2}\pi$. Likewise one can show that the same is also true for points in this domain where $-\frac{3}{2}\pi \leq \arg(\xi) \leq -\pi$, and this establishes that the asymptotic expansion (\ref{eq27}) holds for $z \in \mathbf{Z}$ as defined by (\ref{eq16a}).

So from (\ref{eq27}) we have derived the asymptotic expansion
\begin{equation}
\label{eq31}
u\left\{\mathcal{X}(u,z)-\xi \right\}
+\sum\limits_{s=0}^{\infty}\frac{
a_{2s+1}}{(2s+1) \left\{u 
\mathcal{X}(u,z)\right\}^{2s+1}}
-\sum\limits_{s=0}^{\infty}
\frac{\hat{E}_{2s+1}(z)}
{u^{2s+1}}
\sim  0
\quad (u \to \infty),
\end{equation}
which is uniformly valid for $z \in \mathbf{Z}$ with $|\mathcal{X}(u,z)| \geq \delta >0$. The asymptotic relation (\ref{eq31}) establishes that for the stated values of $z$ there exists an asymptotic expansion of the form $\mathcal{X}(u,z) \sim \sum_{s=0}^{\infty}\Theta_{s}(z) u^{-2s}$ as $u \to \infty$. From (\ref{eq31}) we perceive that the leading term is clearly $\Theta_{0} = \xi$, and thus from (\ref{eq28}), and recalling that $\xi = \frac23 \zeta^{3/2}$,
\begin{equation}
\label{eq31a}
\mathcal{Z}(u,z) \sim \zeta
\left\{1+\sum_{s=1}^{\infty}
\frac{3\Theta_{s}(z)}{2 u^{2s} \xi}\right\}^{2/3}.
\end{equation}
On expanding this via \cite[Chap. 1, Ex. 8.4]{Olver:1997:ASF} we obtain the asserted asymptotic series of the form (\ref{eq32}). 

Returning to (\ref{eq31}), we insert in this the expansion for $\mathcal{X}(u,z)$, expand as an asymptotic series in inverse powers of $u$, and set the coefficient of $u^{-1}$ to zero, noting that the leading $u$ terms cancel since $\Theta_{0}(z)=\xi$. As a result we find $\Theta_{1}(z)$ explicitly, and from this, (\ref{eq32}) and (\ref{eq31a}) yields
\begin{equation}
\label{eq44a}
\Upsilon_{1}(z)=
\frac{3 \xi \hat{E}_{1}(z)}{2\zeta^{2}}
-\frac{5}{48\zeta^{2}}.
\end{equation}

To find an recursion for the other coefficients $\Upsilon_{s}(z)$ ($s=2,3,4,\ldots$), we have from (\ref{eq06}), (\ref{eq32}) and (\ref{eq28}) as $u \to \infty$
\begin{equation}
\label{eq33}
\mathcal{X}(u,z) \sim \xi
\left\{1+\sum_{s=1}^{\infty}
\frac{\Upsilon_{s}(z)}{u^{2s}\zeta}\right\}^{3/2},
\end{equation}
and hence for $s=-1,0,1,2,\ldots$
\begin{equation}
\label{eq34}
\frac{1}{\mathcal{X}(u,z)^{2s+1}}
\sim 
\frac{1}{\xi^{2s+1}}
\left\{1
+\sum_{l=1}^{\infty}
\frac{\Upsilon_{l}(z)}{u^{2l} \zeta}
\right\}^{-3s-(3/2)}.
\end{equation}
Now again use \cite[Chap. 1, Ex. 8.4]{Olver:1997:ASF} to obtain
\begin{equation}
\label{eq35}
\frac{1}{\left\{u 
\mathcal{X}(u,z)\right\}^{2s+1}}
\sim 
\frac{1}{\xi^{2s+1}}
\sum_{l=0}^{\infty} 
\frac{d_{s,l}(z)}{u^{2l+2s+1}},
\end{equation}
where $d_{s,0}(z)=1$ and for $l=1,2,3,\ldots$, and any integer $s$
\begin{equation}
\label{eq36}
d_{s,l}(z)
=-{\frac{1}{2 l \zeta}}
\sum _{k=1}^{l}
\left\{(6s+1)k+2l \right\}
\Upsilon_{k}
\left( z \right) d_{s,l-k}(z).
\end{equation}
So from (\ref{eq35}) as $u \to \infty$
\begin{equation}
\label{eq37}
\sum\limits_{s=0}^{\infty}\frac{
a_{2s+1}}{(2s+1) \left\{u 
\mathcal{X}(u,z)\right\}^{2s+1}}
\sim  
\sum\limits_{s=0}^{\infty}
\frac{g_{s}(z)}{u^{2s+1}},
\end{equation}
where
\begin{equation}
\label{eq38}
g_{s}(z)=\sum_{k=0}^{s}\frac{a_{2k+1}d_{k,s-k}(z)}{(2k+1)\xi^{2k+1}}.
\end{equation}

For example we find that
\begin{equation}
\label{eq39}
g_{0}(z)=\frac{5}{72 \xi},
\end{equation}
and
\begin{equation}
\label{eq40}
g_{1}(z)=\frac{5}
{31104\xi^{3}}
\left\{221- 288 \zeta^{2} 
\Upsilon_{1}(z)\right\}.
\end{equation}
Then using (\ref{eq31}), (\ref{eq35}) with $s=-1$, and (\ref{eq37}) we have by setting each coefficient of $u^{-2s-1}$ to zero
\begin{equation}
\label{eq41}
\xi d_{-1,s+1}(z) +g_{s}(z)
-\hat{E}_{2s+1}(z)=0
\quad (s=0,1,2,\ldots).
\end{equation}

Note from (\ref{eq36}) for $s=0,1,2,\ldots$
\begin{equation}
\label{eq42}
d_{-1,s+1}(z)
=-\frac{1}{2 (s+1) \zeta}
\sum _{k=1}^{s+1}
\left\{2s+2-5k \right\}
\Upsilon_{k}(z) d_{-1,s-k+1}(z),
\end{equation}
in which $d_{-1,0}(z)=1$. Thus 
\begin{equation}
\label{eq43}
d_{-1,1}(z)
=\frac{3 \Upsilon_{1}(z)}{2\zeta},
\end{equation}
which appears in the $s=0$ term of (\ref{eq41}). With this and (\ref{eq39}) we confirm the leading coefficient (\ref{eq44a}).

Each $\Upsilon_{s}(z)$ ($s=2,3,4,\dots$) can then similarly be obtained in turn by solving (\ref{eq41}), with these other coefficients given in terms of the previous ones. The general recursion formula is readily found to be as given by (\ref{eq43a}) below. From this, the next three coefficients are found to be
\begin{equation}
\label{eq45}
\Upsilon_{2}(z)=
-\frac{\Upsilon_{1}^{2}(z)}{4 \zeta}
+\frac{5 \Upsilon_{1}(z)}{32\zeta^{3}}
+\frac{3 \xi \hat{E}_{3}(z)}{2\zeta^{2}}
-\frac{1105}{9216\zeta^{5}},
\end{equation}
\begin{multline}
\label{eq46}
\Upsilon_{3}(z)=
-\frac{\Upsilon_{1}(z)\Upsilon_{2}(z)}{2 \zeta}
+\frac{\Upsilon_{1}^{3}(z)}{24 \zeta^{2}}
-\frac{25\Upsilon_{1}^{2}(z)}{128 \zeta^{4}}
+\frac{5 \Upsilon_{2}(z)}{32\zeta^{3}}
\\
+\frac{1105 \Upsilon_{1}(z)}{2048 \zeta^{6}}
+\frac{3 \xi \hat{E}_{5}(z)}{2 \zeta^2}
-\frac{82825}{98304 \zeta^{8}},
\end{multline}
and
\begin{multline}
\label{eq47}
\Upsilon_{4}(z)=
-\frac{\Upsilon_{1}^{4}(z)}{64  \zeta^{3}}
+\frac{\Upsilon_{1}^{2}(z)
\Upsilon_{2}(z)}{8\zeta^{2}}
+\frac{175 \Upsilon_{1}^{3}(z)}{768 \zeta^{5}}
-\frac{\Upsilon_{1}(z)\Upsilon_{3}(z)}{2\zeta}
\\
-\frac{25\Upsilon_{1}(z)\Upsilon_{2}(z)}{64\zeta^{4}}
-\frac{\Upsilon_{2}^{2}(z)}{4\zeta}
-\frac{12155 \Upsilon_{1}^{2}(z)}
{8192  \zeta^{7}}
+\frac{5\Upsilon_{3}(z)}{32\zeta^{3}}
\\
+\frac{1105\Upsilon_{2}(z)}{2048\zeta^{6}}
+\frac{414125\Upsilon_{1}(z)}{65536\zeta^{9}}
+\frac{3\xi\hat{E}_{7}(z)}{2\zeta^{2}}
-\frac{1282031525}{88080384\zeta^{11}}.
\end{multline}

Our expansion for $\mathcal{Z}(u,z)$ was the main priority, as we only need this for the expansions for the zeros. But for completeness we note that $\mathcal{Y}(u,z)$ can be expressed in terms of the derivative of $\mathcal{Z}(u,z)$ as follows\footnote{This method was suggested by one of the reviewers}. First on differentiating (\ref{eq20}) and using the Wronskian for Airy functions \cite[Eq. 9.2.8]{NIST:DLMF}
\begin{equation}
\label{eq48a}
\frac{\lambda_{\pm 1}(u)\mathscr{W}
\left\{W_{0}(u,z),W_{\pm 1}(u,z)\right\}}
{W_{0}^{2}(u,z)}
=\frac{u^{2/3}}{2 \pi \mathrm{Ai}^{2}(u^{2/3}
\mathcal{Z}(u,z))}
\frac{\partial \mathcal{Z}(u,z)}{\partial z}.
\end{equation}
Then substitute (\ref{eq19}) and we arrive at
\begin{equation}
\label{eq48b}
\mathcal{Y}(u,z)
=c_{1}(u)
\left\{\frac{\partial \mathcal{Z}(u,z)}
{\partial z}\right\}^{-1/2},
\end{equation}
where
\begin{equation}
\label{eq48c}
c_{1}(u)
=\frac{\left[2 \pi \lambda_{\pm 1}(u)
\mathscr{W}
\left\{W_{0}(u,z),W_{\pm 1}(u,z)\right\}\right]^{1/2}}
{u^{1/3}c(u)},
\end{equation}
which is independent of $z$ by virtue of Abel's identity.

The main result of this section reads as follows.
\begin{theorem}
\label{thm2.2}
Let $f(z)$ and $g(z)$ be meromorphic functions in a domain $Z$ satisfying the conditions of this section, with $z_{0} \in Z$ being a simple zero of $f(z)$. Then Eq. (\ref{eq05}), which has a simple turning point at $z=z_{0}$, has numerically satisfactory solutions $W_{j}(u,z)$ ($j=0,\pm 1$) given by (\ref{eq18}) and (\ref{eq19}) where $c(u) \neq 0$ is arbitrary, and $\lambda_{\pm 1}(u)$ are the connection coefficients in (\ref{eq13}), having the asymptotic expansions given by (\ref{eq26a}). In these expressions $c(u)\mathcal{Y}(u,z)$ and $\mathcal{Z}(u,z)$ are analytic for $z \in \mathbf{Z} \cup \{z_{0}\}$, where $\mathbf{Z}$ is defined by (\ref{eq16a}), and slowly-varying functions for large $u$ in this domain. In particular, $\mathcal{Z}(u,z)$ possesses the asymptotic expansion 
\begin{equation}
\label{eq32a}
\mathcal{Z}(u,z) \sim \zeta
+\sum_{s=1}^{\infty}
\frac{\Upsilon_{s}(z)}{u^{2s}}
\quad (u \to \infty),
\end{equation}
uniformly for $z\in\mathbf{Z} \cup \{z_{0}\}$, where
\begin{equation}
\label{eq44}
\Upsilon_{1}(z)=
\frac{3 \xi \hat{E}_{1}(z)}{2\zeta^{2}}
-\frac{5}{48\zeta^{2}},
\end{equation}
and for $s=1,2,3,\ldots$
\begin{multline}
\label{eq43a}
\Upsilon_{s+1}(z)
=\frac{1}{3(s+1)}
\sum_{k=1}^{s}
\left\{2s+2-5k \right\}
\Upsilon_{k}(z) d_{-1,s-k+1}(z)
\\
+\frac{3\xi}{2\zeta^{2}}\left\{\hat{E}_{2s+1}(z)-g_{s}(z)\right\}.
\end{multline}
Here, $\xi$ and $\zeta$ are given by (\ref{eq06}), $\hat{E}_{s}(z)$ ($s=1,2,3,\ldots$) are the LG coefficients in (\ref{eq11}) and (\ref{eq12}) given by \cite[Eqs. (1.6), (1.12) - (1.15)]{Dunster:2021:SEB}, $d_{s,l}(z)$ ($l=1,2,3,\ldots$ and $s$ any integer) are given recursively by (\ref{eq36}) starting with $d_{s,0}(z)=1$, and $g_{s}(z)$ ($s=0,1,2,\ldots$) are given by (\ref{eq38}). Moreover, the coefficients $\Upsilon_{s}(z)$ have a removable singularity at $\xi=\zeta=0$ (which correspond to $z=z_{0}$).
\end{theorem}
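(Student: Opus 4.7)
The bulk of the proof is already carried out in the discussion leading up to the theorem, so the plan is to organize those ingredients into a coherent argument and then address the one issue that is not fully treated there, namely the removable singularity at $z=z_{0}$. First I would fix $c(u)\neq 0$ arbitrarily and take (\ref{eq18}) and (\ref{eq19}) as the \emph{definitions} of $\mathcal{Y}(u,z)$ and $\mathcal{Z}(u,z)$. The fact that these two equations are consistent with a third (with $j=0$) follows from the linear relation (\ref{eq13}) together with the Airy connection formula (\ref{eq16}), as already noted after (\ref{eq19}). Analyticity of $\mathcal{Z}(u,z)$ in $\mathbf{Z}$ is then immediate from \cref{lemma01}, and analyticity of $\mathcal{Y}(u,z)$ follows by dividing (\ref{eq19}) by $\mathrm{Ai}\bigl(u^{2/3}\mathcal{Z}(u,z)\bigr)$ at points where this Airy value is nonzero (and by using (\ref{eq18}) with either sign at points where it vanishes).

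Next, to establish the expansion (\ref{eq32a}), I would invoke the master identity (\ref{eq31}), which the text derives case-by-case for $z\in Z_{0}\cap Z_{\pm 1}$ and $z\in Z_{1}\cap Z_{-1}$ by inserting the LG expansions (\ref{eq11}), (\ref{eq12}), the connection coefficient asymptotics (\ref{eq26a}), the Airy exponential form (\ref{eq25})--(\ref{eq26}), and (in the third region) the variants (\ref{eq29}) and (\ref{eq12a}). Substituting the ansatz (\ref{eq32a}) into (\ref{eq31}) through (\ref{eq28}), expanding $\mathcal{X}^{-(2s+1)}$ via the binomial series (\ref{eq33})--(\ref{eq36}), and using (\ref{eq37}) and (\ref{eq38}) reduces (\ref{eq31}) to the infinite collection (\ref{eq41}). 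Separating out $\Upsilon_{s+1}$ from $d_{-1,s+1}(z)$ using (\ref{eq42}) yields (\ref{eq43a}), with the seed value (\ref{eq44}) coming from the $s=0$ case and (\ref{eq39}). The expansion then holds uniformly for $z\in\mathbf{Z}$ from the uniform validity of the ingredient expansions.

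The one point that is not explicit in the preceding discussion, and which I expect to be the main obstacle, is the claim that each $\Upsilon_{s}(z)$ is regular at $z=z_{0}$, i.e.\ at $\xi=\zeta=0$. The apparent poles in $\zeta$ and $\xi$ in (\ref{eq44a}) and (\ref{eq43a}) must be shown to cancel against the numerators. The strategy is induction on $s$, using two facts already recorded in the excerpt: (i) $(z-z_{0})^{1/2}\hat{E}_{2s+1}(z)$ is meromorphic at $z=z_{0}$ with, in fact, a suitable power series expansion, and (ii) $\xi=\tfrac{2}{3}\zeta^{3/2}$ with $\zeta$ analytic and $\zeta'(z_{0})\neq 0$, so $\xi\hat{E}_{2s+1}(z)$ is analytic in $\zeta$ near $\zeta=0$. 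Combined with the analyticity of $\mathcal{Z}(u,z)$ at $z_{0}$ from \cref{lemma01}, the formal series (\ref{eq32a}) must agree term by term with the Taylor expansion of the analytic function $\mathcal{Z}(u,z)$ at $z_{0}$, which forces the singular contributions in each $\Upsilon_{s}$ to cancel. The induction step then uses the recursion (\ref{eq43a}): assuming $\Upsilon_{1},\ldots,\Upsilon_{s}$ are regular at $\zeta=0$, the contributions from $g_{s}(z)$, $d_{-1,s-k+1}(z)$ and $\tfrac{3\xi}{2\zeta^{2}}\hat{E}_{2s+1}(z)$ combine so that the overall expression is regular, by comparison with the known analytic extension.

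Finally, the ``slowly-varying'' assertion is a direct consequence of (\ref{eq32a}) and the analogous expansion (\ref{eq49}) derived for $\mathcal{Y}$, both of which are power series in $u^{-2}$ (respectively $u^{-1}$) with coefficients independent of $u$, so only the uniformity and analyticity statements need to be packaged into the theorem's conclusion.
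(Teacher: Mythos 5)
Most of your outline coincides with the paper's own route: the identity (\ref{eq31}) derived region by region from (\ref{eq11})--(\ref{eq12}), (\ref{eq26a}), (\ref{eq25})--(\ref{eq26}), (\ref{eq29}) and (\ref{eq12a}); the substitution of the ansatz (\ref{eq32a}) through (\ref{eq33})--(\ref{eq38}) to obtain (\ref{eq41}); and the extraction of $\Upsilon_{s+1}$ from $d_{-1,s+1}(z)$ via (\ref{eq42}) to get (\ref{eq44}) and (\ref{eq43a}). The paper's displayed proof adds only one algebraic step here, multiplying (\ref{eq41}) by $\xi$ and using $\xi^{2}=\tfrac{4}{9}\zeta^{3}$ to obtain (\ref{eq43c}), from which it proves by induction that each $\Upsilon_{s}(z)$ is \emph{meromorphic} at $\zeta=0$.

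The genuine gap is in your treatment of the removable singularity and of the validity of (\ref{eq32a}) at the turning point itself. Your argument that ``the formal series must agree term by term with the Taylor expansion of the analytic function $\mathcal{Z}(u,z)$ at $z_{0}$, which forces the singular contributions to cancel'' does not work as stated: (\ref{eq32a}) is an expansion in powers of $u^{-2}$, and at this stage it is only known to hold for $z\in\mathbf{Z}$, a region that \emph{excludes} $z_{0}$ because the parent LG expansions break down there. Analyticity of $\mathcal{Z}(u,\cdot)$ at $z_{0}$ for each fixed $u$ (\cref{lemma01}) says nothing by itself about the behavior at $z_{0}$ of coefficients that are defined only through an expansion valid away from that point; there is no term-by-term comparison available. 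The missing ingredient is exactly what the paper cites, \cite[Thm. 3.1]{Dunster:2021:NKF}: for a function analytic in a full neighborhood of $z_{0}$ that possesses a uniform asymptotic expansion on a domain surrounding (but excluding) $z_{0}$ with coefficients meromorphic there, a Cauchy-integral argument on a loop around the turning point shows the coefficients have removable singularities and the expansion extends to the full neighborhood; this yields both the removability of the singularities of $\Upsilon_{s}$ and the claimed uniformity on $\mathbf{Z}\cup\{z_{0}\}$, neither of which your sketch establishes. Two smaller points: only $(z-z_{0})^{1/2}\hat{E}_{2s+1}$ is asserted to be meromorphic at $z_{0}$, so $\xi\hat{E}_{2s+1}$ is meromorphic rather than analytic there (hence the induction can only give meromorphy of $\Upsilon_{s}$, with analyticity coming from the cited theorem); and uniform validity on $\mathbf{Z}$ away from the turning point rests on the error bounds of \cite[Thms. 2.1 and 2.4]{Dunster:2021:SEB} for the parent LG and Airy expansions, which should be invoked explicitly rather than appealing generically to ``uniformity of the ingredient expansions''.
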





\begin{proof}
From (\ref{eq06}) we have $\xi^{2}=\frac49\zeta^{3}$. Thus from (\ref{eq41})
\begin{equation} 
\label{eq43c}
\tfrac49\zeta^{3} d_{-1,s+1}(z) +\xi g_{s}(z)
-\xi \hat{E}_{2s+1}(z) = 0
\quad (s=0,1,2,\ldots).
\end{equation}
Then from (\ref{eq38}) and (\ref{eq42}) one can readily prove by induction that each $\Upsilon_{s}(z)$ ($s=1,2,3,\ldots$) is meromorphic at $z=z_{0}$ ($\zeta = 0$), on noting that this is also true for $\xi \hat{E}_{2s+1}(z)$, and recalling that $\zeta$ is an analytic function of $z$ at this point. Thus the coefficients $\Upsilon_{s}(z)$ actually have a removable singularity at $z=z_{0}$ by virtue of \cite[Thm. 3.1]{Dunster:2021:NKF}. Moreover that cited theorem shows that the expansion (\ref{eq32}) is valid in a full neighborhood of the turning point $z=z_{0}$. Error bounds given in \cite[Thms. 2.1 and 2.4]{Dunster:2021:SEB} for the Airy and LG expansions (\ref{eq11}), (\ref{eq12}), (\ref{eq25}) and (\ref{eq26}) establishes that the same is true away from the turning point (specifically, at all points where these parent expansions are valid, namely $\mathbf{Z}$).
\end{proof}

For example, if near the turning point $f(z)=\sum_{s=1}^{\infty}f_{s}(z-z_{0})^{s}$ ($f_{1} \neq 0$) and $g(z)=\sum_{s=0}^{\infty}g_{s}(z-z_{0})^{s}$ then one finds from \cite[Eqs. (1.6), (1.12) and (1.14)]{Dunster:2021:SEB}, together with (\ref{eq06}) and (\ref{eq44a}), that
\begin{equation} 
\label{eq48}
\Upsilon_{1}(z)
=\tfrac{3}{140}12^{1/3}f_{1}^{-8/3}
\left\{
35f_{{1}}^{2}g_{0}+15f_{1}f_{3}-9f_{{2}}^{2}
\right\} + \mathcal{O}\left(z-z_{0}\right)
\quad (z \to z_{0}).
\end{equation}

\section{Zeros of Bessel functions}
\label{sec3}

The first step is to apply the Liouville transformations described in \cref{sec2} to Bessel's equation. To this end, we first note from \cite[Chap. 11, Eq. (10.01)]{Olver:1997:ASF} that the functions $w=z^{1/2}J_{\nu}(\nu z)$, $w=z^{1/2}H_{\nu}^{(1)}(\nu z)$ and $w=z^{1/2}H_{\nu}^{(2)}(\nu z)$ satisfy (\ref{eq05}) with $u=\nu$ and
\begin{equation} 
\label{eq50}
f(z)=\frac{1-z^{2}}{z^{2}},
\quad  g(z)=-\frac{1}{4z^{2}}.
\end{equation}
This ODE has turning points at $z=\pm 1$, and our expansions are valid in domains which contain the turning point $z=z_{0}=1$, but not the turning point $z=-1$. Note that the singularity at $z=0$ is admissible according to the definition given in \cref{sec2}, and $f(z)$ and $g(z)$ have the appropriate behavior (\ref{fginfinity}) at $z=\infty$, with $p=0$ and $q=-2$ in the notation of that condition.

Now from (\ref{eq06}) the appropriate Liouville transformation for $z \neq 0$ is 
\begin{equation}
\label{eq51}
\xi =\frac{2}{3}\zeta ^{3/2}
= \int_{z}^{1}\frac{\left(1-t^2\right)^{1/2}}{t}dt
=\ln \left\{ {\frac{1+\left( {1-z^{2}}\right)^{1/2}}
{z}}\right\}
-\left( {1-z^{2}}\right) ^{1/2}.
\end{equation}
The transformed variable $\zeta$ is real for positive $z$, with $\zeta \to \pm \infty$ as $z \to 0,+\infty$. It is analytic at $z=1$ and can be defined by analytic continuation in the whole complex plane cut along the negative real axis. $\xi$ is positive for $z\in (0,1)$ and defined by continuity elsewhere in the $z$ plane having cuts along $(-\infty,0]$ and $[1,\infty)$; see \cite[Chap. 10, \S 10.1]{Olver:1997:ASF}. For $1 \leq z < \infty$ ($-\infty < \zeta \leq 0$) one can use 
\begin{equation}
\label{eq52}
\frac{2}{3}(-\zeta)^{3/2}
=\int_{1}^{z}\frac{\left(t^2-1\right)^{1/2}}{t}dt
=\left(z^{2}-1\right)^{1/2}
-\mathrm{arcsec}(z).
\end{equation}

For the LG coefficients it is convenient to work with the new variable 
\begin{equation}  
\label{eq53}
\beta=\left(1-z^{2}\right)^{-1/2}.
\end{equation}
Here the branch of the square root is positive for $-1<z<1$ and is continuous in the plane having a cuts along $(-\infty,-1]$ and $[1,\infty)$. Note that $\beta \rightarrow 0$ as $z \rightarrow \infty$. The coefficients in (\ref{eq11}) and (\ref{eq12}) are then given in \cite[Eqs. (61) - (63)]{Dunster:2022:LAS}, and from this we have $\hat{E}_{s}(z)=\mathrm{E}_{s}(\beta)$, which are polynomials in $\beta$ given by
\begin{equation}  
\label{eq55}
\mathrm{E}_{1}(\beta)=\tfrac{1}{24}\beta
\left(5\beta^{2}-3\right),
\quad
\mathrm{E}_{2}(\beta)=
\tfrac{1}{16}\beta^{2}\left(\beta^{2}-1\right)
\left(5\beta^{2}-1\right),
\end{equation}
and for $s=2,3,4,\ldots$
\begin{equation}  
\label{eq57}
\mathrm{E}_{s+1}(\beta) =
\frac{1}{2} \beta^{2} \left(\beta^{2}-1 \right)\mathrm{E}_{s}^{\prime}(\beta)
+\frac{1}{2}\int_{0}^{\beta}
p^{2}\left(p^{2}-1 \right)
\sum\limits_{j=1}^{s-1}
\mathrm{E}_{j}^{\prime}(p)
\mathrm{E}_{s-j}^{\prime}(p) dp.
\end{equation}
Note that $\lim_{z\to\infty}\hat{E}_{s}(z)=\mathrm{E}_{s}(0)=0$ for $s=1,2,3,\ldots$.

From \cite[Eq. (5.26)]{Dunster:2021:SEB} the connection constants $\lambda_{\pm 1}(u)=\lambda_{\pm 1}(\nu)$ appearing in (\ref{eq13}) are the same, and explicitly given by
\begin{equation}
\label{eq58}
\lambda_{\pm 1}(\nu)
=\left( \frac{1}{2\pi {\nu }}\right)^{1/2}
\frac{e^{{\nu }}\Gamma (\nu +1) }{{\nu }^{{\nu }}}.
\end{equation}

In the notation of \cref{sec2} we have $z^{(0)}=0$, $z^{(\pm 1)}= \mp i \infty$. The Bessel functions $z^{1/2}J_{\nu}(\nu z)$, $z^{1/2}H_{\nu}^{(1)}(\nu z)$ and $z^{1/2}H_{\nu }^{(2)}(\nu z)$ are multiples of $W_{0}(\nu,z)$, $W_{-1}(\nu,z)$ and $W_{1}(\nu,z)$, respectively, since each pair is the unique solution which is recessive at one of the singularities $z=0,\pm i \infty$. Specifically for the latter two, we have from \cite[Eqs. (5.16), (5.22) and (5.23)]{Dunster:2021:SEB}
\begin{equation}
\label{eq58b}
z^{1/2}H_{\nu}^{(1,2)}(\nu z)
= \mp i \sqrt{\frac{2}{\nu \pi}}
W_{\mp 1}(\nu,z).
\end{equation}
Thus from (\ref{eq18}), (\ref{eq58}) and (\ref{eq58b}) we have the identifications
\begin{equation}
\label{eq59}
z^{1/2}H_{\nu}^{(1,2)}(\nu z)
=2 e^{\mp \pi i/3}
\mathcal{Y}(\nu,z)
\mathrm{Ai}_{\mp 1}\left( \nu^{2/3}
\mathcal{Z}(\nu,z)\right),
\end{equation}
where we have chosen for convenience $c(\nu)=\sqrt{2 \pi \nu}\, \lambda_{\pm 1}(\nu)$. 

From (\ref{eq16}), (\ref{eq59}) and the Bessel connection formula 
\begin{equation}
\label{eq60a}
J_{\nu}(\nu z)
=\tfrac12\left\{H_{\nu }^{(1)}(\nu z)
+H_{\nu }^{(2)}(\nu z)\right\},
\end{equation}
we obtain the identity
\begin{equation}
\label{eq61}
z^{1/2}J_{\nu}(\nu z)
=\mathcal{Y}(\nu,z) \mathrm{Ai}
\left( \nu^{2/3}\mathcal{Z}(\nu,z)\right).
\end{equation}
All other Bessel functions can be expressed similarly. For example from (\ref{eq59}) and \cite[Eqs. 9.2.10 and 10.4.4]{NIST:DLMF} we have
\begin{equation}
\label{eq62}
z^{1/2}Y_{\nu}(\nu z)
=-\mathcal{Y}(\nu,z) \mathrm{Bi}
\left( \nu^{2/3}\mathcal{Z}(\nu,z)\right).
\end{equation}

Now consider determining the coefficients in the expansion (\ref{eq32a}). For these it is helpful to use the variable given by
\begin{equation}
\label{eq65}
\sigma(z)=\zeta^{1/2}\beta
=\left(\frac{\zeta}{ 1-z^{2}}\right)^{1/2},
\end{equation}
which is analytic at $z=1$, and indeed from (\ref{eq51})
\begin{equation}
\label{eq66}
\sigma(z)
=\frac{1}{2^{1/3}}
-\frac{{2}^{2/3}}{5}(z-1)
+\mathcal{O}\left\{(z-1)^2\right\}.
\end{equation}

Now from (\ref{eq53}) - (\ref{eq57}) we observe that each $\zeta^{1/2}\mathrm{E}_{2s+1}(\beta)$ ($s=0,1,2,\ldots$) is a polynomial in $\sigma$ and a rational function of $\zeta$, and hence so is $\xi \hat{E}_{2s+1}(z)=\frac{2}{3}\zeta^{3/2}\mathrm{E}_{2s+1}(\beta)$. Then from \cref{thm2.2} we find that $\Upsilon_{s}(z)$ are polynomials in $\sigma$ and rational functions of $\zeta$, with the first four given by
\begin{equation}
\label{eq67}
\Upsilon_{1}(z)
=\frac{1}{ 48\,\zeta^{2}}
\left\{
10\,\sigma^{3}-6\,\sigma\,\zeta-5
\right\},
\end{equation}
\begin{multline}
\label{eq68}
\Upsilon_{2}(z)
=\frac{1}{ 11520\,\zeta^{5}}
\left\{
11050\,\sigma^{9}-19890\,\sigma^{7}\zeta
+9558\,\sigma^{5}\zeta^{2}
-125\,\sigma^{6}+150\,\sigma^{4}\zeta
\right.
\\
\left.
-45\,\sigma^{2}\zeta^{2}-250 \left( 3\,\zeta^{3}-2 \right) {
\sigma}^{3}-300\,\sigma\,\zeta-1600
\right\},
\end{multline}
\begin{multline}
\label{eq69}
\Upsilon_{3}(z)
=\frac{1}{5806080 \,\zeta^{8}}
\left\{
156539250\,\sigma^{15}-469617750\,\sigma^{13}\zeta+509154660\,{
\sigma}^{11}\zeta^{2}
\right.
\\
-580125\,\sigma^{12}
+1392300\,\sigma^{10}\zeta-1128330
\,\sigma^{8}\zeta^{2}
-140\left( 1681389\,\zeta^{3}-8350 \right) 
\sigma^{9}
\\
+90\left( 450441\,\zeta^{3}-23380 \right) \sigma^{7}\zeta-378\left( 3219\,\zeta^{3}
-2680 \right) \sigma^{5}\zeta^{2}
\\
+147
 \left( 2316\,\zeta^{3}-625 \right) \sigma^{6}
-7875\left( 3\,\zeta^{3}-14 \right)
\sigma^{4}\zeta
-33075\sigma^{2}\zeta^{2}
\\
\left.
-6720 \left( 12\,\zeta^{3}-125 \right) 
\sigma^{3}
-504000\,\sigma\,\zeta-5398750
\right\},
\end{multline}
and
\begin{multline}
\label{eq70}
\Upsilon_{4}(z)
=\frac{1}{2786918400 \,\zeta^{11}}
\left\{
5192227676250 \, \sigma^{21}
- 21807356240250 \,\sigma^{19} \zeta 
\right.
\\
+ 36744870767850 \, \sigma^{17} \zeta^{2}
-8468000625 \,\sigma^{18}
+30484802250 \,\sigma^{16}\zeta
-42732195975 \,\sigma^{14}\zeta^{2}
\\
-210 \left( 150024588453\,
\zeta^{3}-74818750 \right) \sigma^{15}
+3150
\left( 4547729961\,\zeta^{3}-14963750 \right) \sigma^{13}
\zeta
\\
-9450\left( 342342531\,\zeta^{3}-5408660 \right) \sigma^{11}\zeta^{2}
+35 \left( 832214916\,\zeta^{3}-11624375 \right) \sigma^{12}
\\
-3 \left( 3241689237 \,
\zeta^{3}-325482500 \right) \sigma^{10} \zeta
+1350 \left( 1001703\,\zeta^{3}-586285 \right) \sigma^{8}\zeta^{2}
\\
+350 \left( 827921169\,\zeta^{6}
-67546260\,\zeta^{3}+5347750 \right) \sigma^{9}
\\
-90 \left( 50723955\,\zeta^{6}
-45297318\,\zeta^{3}+37434250\right)
\sigma^{7}\zeta
-18900 \left( 6513\,\zeta^{3}-85835
 \right) \sigma^{5}\zeta^{2}
 \\
 -105 \left( 375777\,\zeta^{6}-2275980\,\zeta^{3}
 +2600000 \right) \sigma^{6}
 -1575\left( 10563\,\zeta^{3}-208000 \right) 
\sigma^{4}\zeta
\\
\left.
-98280000 \, \sigma^{2}\zeta^{2}
-17500 \left( 7389\,\zeta^{3}-
246800 \right)
\sigma^{3}-2591400000\,\sigma\,\zeta
-43222750000
\right\}.
\end{multline}

Also from \cref{thm2.2} we have that these are analytic at $z=1$ ($\zeta=0$), and to confirm this we find from (\ref{eq51}), (\ref{eq65}), and (\ref{eq67}) - (\ref{eq70}) as $z \to 1$
\begin{equation}
\label{eq71}
\Upsilon_{1}(z)=
\tfrac{1}{70} 2^{1/3}
+\mathcal{O}(z-1),
\end{equation}
\begin{equation}
\label{eq72}
\Upsilon_{2}(z)=
-\tfrac{82}{73125} 2^{1/3}
+\mathcal{O}(z-1),
\end{equation}
\begin{equation}
\label{eq73}
\Upsilon_{3}(z)=
\tfrac{53780996}{127020403125} 2^{1/3}
+\mathcal{O}(z-1),
\end{equation}
and
\begin{equation}
\label{eq74}
\Upsilon_{4}(z)=
-\tfrac{52568866144}{142468185234375} 2^{1/3}
+\mathcal{O}(z-1).
\end{equation}

Furthermore as $z \to \infty$ we have
\begin{equation}
\label{eq75a}
\Upsilon_{1}(z)=
\tfrac{1}{108}12^{2/3}z^{-4/3}
\left\{1+\mathcal{O}\left(z^{-1}\right)\right\},
\end{equation}
\begin{equation}
\label{eq75b}
\Upsilon_{2}(z)=
-\tfrac{4}{729}12^{2/3}z^{-10/3}
\left\{1+\mathcal{O}\left(z^{-1}\right)\right\},
\end{equation}
\begin{equation}
\label{eq75c}
\Upsilon_{3}(z)=
\tfrac{5218}{295245}12^{2/3}z^{-16/3}
\left\{1+\mathcal{O}\left(z^{-1}\right)\right\},
\end{equation}
and
\begin{equation}
\label{eq75d}
\Upsilon_{4}(z)=
-\tfrac{7672012}
{55801305}12^{2/3}z^{-22/3}
\left\{1+\mathcal{O}\left(z^{-1}\right)\right\},
\end{equation}
which in conjunction with (\ref{eq71}) - (\ref{eq74}) underlines the uniform validity of the expansion (\ref{eq32a}) at the turning point and for unbounded $z$.

Now for $J_{\nu}(\nu z)$ denote the $m$th positive zero by $z_{m}$, which depends of course on $\nu$ but we suppress this dependence for ease of notation. It follows that
\begin{equation}
\label{eq75e}
J_{\nu}\left(\nu z_{m}\right)=0,
\end{equation}
and we let the corresponding value of $\zeta$ be given by $\zeta=\zeta_{m}$. From (\ref{eq61}) we must solve
\begin{equation}
\label{eq82a}
\mathcal{Z}(\nu,z_{m})=\nu^{-2/3}\mathrm{a}_{m},
\end{equation}
where we recall that $\mathrm{a}_{m}$ denotes the $m$th (negative) zero of the Airy function $\mathrm{Ai}(x)$, ordered by increasing absolute values. We seek the coefficients in the asymptotic expansion (\ref{eq03}), where $j_{\nu,m}=\nu z_{m}$.

On referring to (\ref{eq32a}) and (\ref{eq82a}) we deduce that
\begin{equation}
\label{eq84}
\nu^{-2/3}\mathrm{a}_{m} 
\sim \zeta_{m}
+\sum_{s=1}^{\infty}
\frac{\Upsilon_{s}(z_{m})}{\nu^{2s}},
\end{equation}
recalling that $\zeta_{m}=\zeta(z_{m})$. By neglecting all but the leading terms in (\ref{eq03}) and (\ref{eq84}) for large $\nu$ leads to the first order approximation $\zeta_{m} \approx  \zeta_{m,0}$, where $\zeta_{m,0}=\nu^{-2/3}\mathrm{a}_{m}$, as defined by (\ref{eq04}). Hence $z_{m,0}$ satisfies the equation
\begin{equation}
\label{eq76}
\zeta\left(z_{m,0}\right)
=\zeta_{m,0}
=\nu^{-2/3}\mathrm{a}_{m}.
\end{equation}
From this and (\ref{eq52}), we perceive that $z_{m,0}$ is the unique solution lying in $(1,\infty)$ of the implicit equation
\begin{multline}
\label{eq77}
\int_{1}^{z_{m,0}}
\frac{\left(t^{2}-1\right)^{1/2}}{t}\,dt
=\left(z_{m,0}^{2}-1\right)^{1/2}
-\mathrm{arcsec}\left( z_{m,0} \right)
\\
= \frac{2}{3}\left(-\zeta_{m,0}\right)^{3/2}
=\frac{2}{3 \nu}\left|\mathrm{a}_{m}\right|^{3/2}.
\end{multline}
Note that $z_{m,0}$ increases monotonically as $\zeta_{m,0}$ decreases monotonically,
\begin{equation}
\label{eq78}
z_{m,0} = 1 -2^{-1/3}\zeta_{m,0}
+\mathcal{O}\left(\zeta_{m,0}^{2}\right)
\quad (\zeta_{m,0} \to 0),
\end{equation}
\begin{equation}
\label{eq79}
z_{m,0} = \tfrac{2}{3}\left|\zeta_{m,0}\right|^{3/2}
+\tfrac{1}{2}\pi+\mathcal{O}\left(
\left|\zeta_{m,0}\right|^{-3/2}
\right)
\quad (\zeta_{m,0} \to -\infty),
\end{equation}
and $1<z_{m,0}<\infty$ since $-\infty<\zeta_{m,0}<0$.

In terms of solving (\ref{eq77}) numerically observe that for any value of $z_{m,0} \in (1,\infty)$
\begin{equation}
\label{eq80}
z_{m,0} -1- \tfrac{1}{2}\pi
< \left(z_{m,0}^{2}-1\right)^{1/2}
-\mathrm{arcsec}\left( z_{m,0} \right)
< z_{m,0},
\end{equation}
and therefore we can solve (\ref{eq77}) in the interval
\begin{equation}
\label{eq81}
\max\left\{
\tfrac{2}{3}\left|\zeta_{m,0}\right|^{3/2},1\right\}
<z_{m,0}< \tfrac{2}{3}\left|\zeta_{m,0}\right|^{3/2}
+1 + \tfrac{1}{2}\pi.
\end{equation}

Now dividing (\ref{eq61}) by (\ref{eq62}) and using the Airy and Bessel function Wronskian relations \cite[Eqs. 9.2.7 and 10.5.2]{NIST:DLMF} we have
\begin{equation}
\label{eq83}
\frac{d}{dz}\mathcal{Z}(\nu,z) 
= \frac{2 \nu^{1/3}
\mathrm{Bi}^{2}\left( \nu^{2/3}\mathcal{Z}(\nu,z)\right)}
{z Y_{\nu}^{2}(\nu z)}.
\end{equation}
At the zero $z=z_{m}$ of $J_{\nu}(\nu z)$ we know that $Y_{\nu}(\nu z_{m}) \neq 0$, and also $\mathrm{Bi}(\nu^{2/3}\mathcal{Z}(\nu,z_{m}))=\mathrm{Bi}(\mathrm{a}_{m}) \neq 0$, from well-known properties of both sets of functions, and consequently from (\ref{eq83}) the derivative of $\mathcal{Z}(\nu,z)$ does not vanish at this point. Therefore from the inverse function theorem for analytic functions \cite[\S 6.22]{Copson:1935:ITF} $z_{m}$ is the unique solution of (\ref{eq82a}), and is analytic as a function of $\zeta_{m,0}$ at $\zeta_{m,0}=0$ ($z_{m,0}=1$). Hence by \cite[Thm. 3.1]{Dunster:2021:NKF} the coefficients $z_{m,s}$ in (\ref{eq03}), which we know are meromorphic at $z=1$, actually all have a removable singularity at this point. We shall determine these coefficients explicitly next.

Let $\delta = \nu^{-2}$ and define for arbitrary positive integer $s$
\begin{equation}
\label{eq86}
\breve{\zeta}(\delta)=
\zeta(z_{m,0}+z_{m,1}\delta + z_{m,2}\delta^{2}
+ \cdots +z_{m,s}\delta^{s}),
\end{equation}
and
\begin{equation}
\label{eq87}
\breve{\Upsilon}_{l}(\delta)=
\Upsilon_{l}(z_{m,0}+z_{m,1}\delta + z_{m,2}\delta^{2}
+ \cdots +z_{m,s}\delta^{s})
\quad (l=1,2,3,\ldots,s).
\end{equation}
Note that the series $\sum_{s=0}^{\infty}z_{m,s}\delta^{s}$ is divergent for $\delta \neq 0$, but this is not an issue in (\ref{eq86}) and (\ref{eq87}) since we truncate this after a finite number of terms. 

Now in (\ref{eq84}) replace $z_{m}$ by its truncated asymptotic expansion (\ref{eq03}) (recalling that $j_{\nu,m}=\nu z_{m}$), let $\nu^{-2}=\delta$, expand each term as a Maclaurin series in $\delta$ using (\ref{eq86}) and (\ref{eq87}), and for $s=1,2,3,\ldots$ set the coefficient of $\delta^{s}$ to zero. This yields
\begin{equation}
\label{eq85}
\frac{1}{s!}\breve{\zeta}^{(s)}(0)
+\frac{1}{(s-1)!}\breve{\Upsilon}_{1}^{(s-1)}(0)
+\frac{1}{(s-2)!}\breve{\Upsilon}_{2}^{(s-2)}(0)
+ \cdots
+\frac{1}{0!}\breve{\Upsilon}_{s}(0)
=0.
\end{equation}
From (\ref{eq86}) and Faà Di Bruno’s Formula \cite[Eq. 1.4.13]{NIST:DLMF}
\begin{equation}
\label{eq88}
\frac{1}{s!}\breve{\zeta}^{(s)}(0)
=\mathlarger{\sum}
\frac{\zeta^{(k)}(z_{m,0})}
{q_{1}!q_{2}!\cdots q_{s}!}
\left\{z_{m,1}\right\}^{q_{1}}
\left\{z_{m,2}\right\}^{q_{2}}
\cdots\left\{z_{m,s}\right\}^{q_{s}},
\end{equation}
and for $j=1,2,3,\ldots,s-1$ we likewise have from (\ref{eq87})
\begin{equation}
\label{eq89}
\frac{1}{j!}\breve{\Upsilon}_{s-j}^{(j)}(0)
=\mathlarger{\sum}
\frac{\Upsilon_{s-j}^{(k)}(z_{m,0})}
{q_{1}!q_{2}!\cdots q_{j}!}
\left\{z_{m,1}\right\}^{q_{1}}
\left\{z_{m,2}\right\}^{q_{2}}
\cdots\left\{z_{m,j}\right\}^{q_{j}},
\end{equation}
where in (\ref{eq88}) the sum is over all nonnegative integers $q_{1}, q_{2}, \ldots , q_{s}$ that satisfy $q_{1}+2q_{2} + \cdots + s\, q_{s} =s$ and $k=q_{1}+q_{2} + \cdots + q_{s}$, and similarly for (\ref{eq89}) with $s$ replaced by $j$.

Now in (\ref{eq88}) for each fixed $s=1,2,3,\ldots$ the only occurrence of $z_{m,s}$ is when $q_{s}=1$ and all the other $q_{l}$ ($l=1,2,\ldots,s-1$) are necessarily zero, and hence in this case $k=1$. The remaining terms in the sum have $q_{s}=0$, and so on pulling out the solitary term having $q_{s}=1$ yields
\begin{multline}
\label{eq90}
\frac{1}{s!}\breve{\zeta}^{(s)}(0)
=z_{m,s} \zeta^{\prime}(z_{m,0})
\\
+\mathlarger{\sum}
\frac{\zeta^{(k)}(z_{m,0})}
{q_{1}!q_{2}!\cdots q_{s-1}!}
\left\{z_{m,1}\right\}^{q_{1}}
\left\{z_{m,2}\right\}^{q_{2}}
\cdots\left\{z_{m,s-1}\right\}^{q_{s-1}},
\end{multline}
where now the sum is over all nonnegative integers $q_{1}, q_{2}, \ldots , q_{s-1}$ that satisfy $q_{1}+2q_{2} + \cdots + (s-1)\, q_{s-1} =s$ and $k=q_{1}+q_{2} + \cdots + q_{s-1}$.

We deduce that $z_{m,s}$ only appears as a solitary term in the first term on the LHS of (\ref{eq85}), and consequently we can solve for it explicitly, yielding for $m=1,2,3,\ldots$, $s=1,2,3,\ldots$
\begin{multline}
\label{eq91}
z_{m,s} 
= -\frac{1}{\zeta^{\prime}(z_{m,0})}
\Biggl\{\mathlarger{\sum}
\frac{\zeta^{(k)}(z_{m,0})}
{q_{1}!q_{2}!\cdots q_{s-1}!}
\left\{z_{m,1}\right\}^{q_{1}}
\left\{z_{m,2}\right\}^{q_{2}}
\cdots\left\{z_{m,s-1}\right\}^{q_{s-1}}
\Biggr.
\\
\Biggl.
+\mathlarger{\sum}_{j=1}^{s-1}
\mathlarger{\sum}
\frac{\Upsilon_{s-j}^{(k)}(z_{m,0})}
{q_{1}!q_{2}!\cdots q_{j}!}
\left\{z_{m,1}\right\}^{q_{1}}
\left\{z_{m,2}\right\}^{q_{2}}
\cdots\left\{z_{m,j}\right\}^{q_{j}}
+\Upsilon_{s}(z_{m,0})
\Biggr\},
\end{multline}
with all sums being null for $s=1$, in this case leaving inside the braces the single term $\Upsilon_{s}(z_{m,0})=\breve{\Upsilon}_{s}(0)$.

Recalling the notation $j_{\nu,m}=\nu z_{m}$, we have proven the following main result of this paper.
\begin{theorem}
\label{thm3.1}
Let $0>\mathrm{a}_{1}>\mathrm{a}_{2}>\ldots$ be the zeros of $\mathrm{Ai}(x)$, and $\zeta$ be given by (\ref{eq51}). For $m=1,2,3,\dots$ define $\zeta_{m,0}=\nu^{-2/3}\mathrm{a}_{m}$ and let $z_{m,0} \in (1,\infty)$ be the unique solution to (\ref{eq77}). Let $z_{m,s}$ ($m=1,2,3,\dots$, $s=1,2,3,\dots$) be given recursively by (\ref{eq91}), with the first sum, and the inner sum of the second double sum, taken as described after (\ref{eq89}) and (\ref{eq90}). In (\ref{eq91}) $\Upsilon_{s}(z)$ are defined as in \cref{thm2.2}, with coefficients $\hat{E}_{s}(z)=\mathrm{E}_{s}(\beta)$ given by (\ref{eq53}) - (\ref{eq57}). Then the $m$th positive zero $j_{\nu,m}$ of $J_{\nu}(z)$ possesses the expansion
\begin{equation}
\label{eq93a}
j_{\nu,m} \sim
\nu \sum_{s=0}^{\infty}
\frac{z_{m,s}}{\nu^{2s}}
\quad (\nu \to \infty),
\end{equation}
uniformly for $m=1,2,3,\dots$.
\end{theorem}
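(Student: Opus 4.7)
The plan is to read off the theorem as a consequence of \cref{thm2.2} combined with the Airy-function representation (\ref{eq61}) of $J_\nu(\nu z)$, by an implicit-function/Lagrange-reversion argument with the bookkeeping handled by Faà di Bruno.

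The first step is to reduce the zero equation $J_\nu(\nu z_m)=0$ to the implicit equation (\ref{eq82a}), namely $\mathcal{Z}(\nu,z_m)=\nu^{-2/3}\mathrm{a}_m$. Since $\mathcal{Y}(\nu,z)$ is a slowly varying nonvanishing factor near the positive real axis (by \cref{thm2.2} together with (\ref{eq49})), the zeros of $z^{1/2}J_\nu(\nu z)$ coincide with those of $\mathrm{Ai}(\nu^{2/3}\mathcal{Z}(\nu,z))$. The next step is to use (\ref{eq83}) to show $d\mathcal{Z}/dz$ is nonzero at $z_m$: indeed $Y_\nu(\nu z_m)\ne 0$ and $\mathrm{Bi}(\mathrm{a}_m)\ne 0$, so the Fabijonas--Olver theorem \cite[Thm. 2.1]{Fabijonas:1999:ORA} supplies both uniqueness of the solution $z_m$ to (\ref{eq82a}) in a neighbourhood of $z_{m,0}$ and the analyticity of $z_m$ as a function of $\zeta_{m,0}$ at $\zeta_{m,0}=0$, so that by \cite[Thm. 3.1]{Dunster:2021:NKF} the expansion (\ref{eq93a}) is valid uniformly in $m$.

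Having justified that a solution of the form $z_m=\sum_{s\ge 0}z_{m,s}\delta^s$ (with $\delta=\nu^{-2}$) exists, I would then \emph{compute} the coefficients by substituting this formal series into (\ref{eq84}) and equating coefficients of $\delta^s$. Setting $\breve{\zeta}(\delta)=\zeta(\sum_{l\le s}z_{m,l}\delta^l)$ and $\breve{\Upsilon}_l(\delta)=\Upsilon_l(\sum_{j\le s}z_{m,j}\delta^j)$ as in (\ref{eq86})--(\ref{eq87}), and noting that the $\delta^0$ equation is $\zeta(z_{m,0})=\zeta_{m,0}$ (which is (\ref{eq76})), the $\delta^s$ equation is exactly (\ref{eq85}). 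Applying Faà di Bruno to each of $\breve{\zeta}^{(s)}(0)$ and $\breve{\Upsilon}_{s-j}^{(j)}(0)$ produces the sums (\ref{eq88})--(\ref{eq89}).

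The key combinatorial observation, which is what makes the recursion explicit rather than merely implicit, is that $z_{m,s}$ itself only appears in $(1/s!)\breve{\zeta}^{(s)}(0)$ and there only through the single index tuple $(q_1,\ldots,q_s)=(0,\ldots,0,1)$, for which $k=1$ and the contribution is $z_{m,s}\zeta'(z_{m,0})$; this separation is displayed in (\ref{eq90}). In the $\breve\Upsilon_{s-j}^{(j)}(0)$ terms with $j\le s-1$ only $z_{m,1},\ldots,z_{m,s-1}$ occur, so no $z_{m,s}$ is present. Since $\zeta'(z_{m,0})\ne 0$ (because $z_{m,0}>1$ is not the turning point), I can solve (\ref{eq85}) for $z_{m,s}$, yielding precisely (\ref{eq91}). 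The $s=1$ case specializes to $z_{m,1}=-\Upsilon_1(z_{m,0})/\zeta'(z_{m,0})$, consistent with the statement that all sums are null in that case. The main obstacle here is purely notational --- keeping the Faà di Bruno multi-indices straight and verifying that $z_{m,s}$ appears linearly with the claimed coefficient --- rather than conceptual; the analytic work has already been discharged by \cref{thm2.2}, the Wronskian identity (\ref{eq83}), and the cited Fabijonas--Olver and Dunster results.
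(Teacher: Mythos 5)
Your proposal is correct and follows essentially the same route as the paper: reduction of $J_{\nu}(\nu z_m)=0$ to (\ref{eq82a}) via (\ref{eq61}), the nonvanishing of $d\mathcal{Z}/dz$ at $z_m$ from the Wronskian identity (\ref{eq83}) together with the Fabijonas--Olver and Dunster results for uniqueness, analyticity and removability of the singularity at $z_{m,0}=1$, and then the Maclaurin expansion of (\ref{eq84}) in $\delta=\nu^{-2}$ with Faà di Bruno, isolating the single linear occurrence of $z_{m,s}$ to obtain (\ref{eq91}). No substantive differences from the paper's own argument.
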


Now, in addition to our previous notation $\zeta_{m,0}=\zeta(z_{m,0})$, define
\begin{equation}
\label{eq92}
\zeta_{m,0}^{\prime}
=\zeta'\left(z_{m,0}\right),  \;
\zeta_{m,0}^{\prime \prime}
=\zeta''\left(z_{m,0}\right),  \;
\ldots,
\end{equation}
and similarly for $s=1,2,3,\ldots$ let
\begin{equation}
\label{eq93}
\Upsilon_{m,s}
=\Upsilon_{s}\left(z_{m,0}\right), \;
\Upsilon_{m,s}^{\prime}
=\Upsilon_{s}'\left(z_{m,0}\right),  \;
\Upsilon_{m,s}^{\prime \prime}
=\Upsilon_{s}''\left(z_{m,0}\right),  \;
\ldots.
\end{equation}
We then get from (\ref{eq91})
\begin{equation}
\label{eq94}
z_{m,1}=
-\frac{\Upsilon_{m,1}}
{\zeta'_{m,0}},
\end{equation}
\begin{equation}
\label{eq95}
z_{m,2} =-\frac{1}
{2 \zeta'_{m,0}}
\left\{
z_{m,1}^{2} \, 
\zeta''_{m,0}+2 z_{m,1}\, \Upsilon'_{m,1}
+2\Upsilon_{m,2}
\right\},
\end{equation}
\begin{multline}
\label{eq96}
z_{m,3} =-\frac{1}
{6 \zeta'_{m,0}}
\left\{
z_{m,1}^3 \,\zeta'''_{m,0} + 6z_{m,1}\,z_{m,2}\,
\zeta''_{m,0} + 3z_{m,1}^2\Upsilon''_{m,1}
\right.
\\
\left. 
+ 6z_{m,2}\Upsilon'_{m,1} 
+ 6z_{m,1}\Upsilon'_{m,2}  
+ 6\Upsilon_{m,3} 
\right\},
\end{multline}
and
\begin{multline}
\label{eq97}
z_{m,4} =-\frac{1}
{24 \zeta'_{m,0}}
\left\{
z_{m,1}^4 \,\zeta^{(4)}_{m,0}
+12 z_{m,1}^{2}\, z_{m,2}\,\zeta'''_{m,0}
+24z_{m,1}\, z_{m,3}\,\zeta''_{m,0}
+12z_{m,2}^2\,\zeta''_{m,0}
\right.
\\
+4z_{m,1}^3 \,\Upsilon'''_{m,1}
+24z_{m,1}\,z_{m,2}\,\Upsilon''_{m,1}
+12z_{m,1}^2 \,\Upsilon''_{m,2}
+24z_{m,3}\,\Upsilon_{m,1}
\\
\bigl. 
+24z_{m,2}\,\Upsilon_{m,2}
+24z_{m,1}\,\Upsilon_{m,3}
+24\Upsilon_{m,4}
\Bigr\}.
\end{multline}

Consider now computation of the derivatives appearing in the expressions (\ref{eq94}) - (\ref{eq97}). First using (\ref{eq51}) and (\ref{eq65})
\begin{equation}
\label{eq98}
\zeta'(z)=
-\{z\sigma(z)\}^{-1},
\end{equation}
and hence from this and (\ref{eq65})
\begin{equation}
\label{eq99}
\sigma'(z)=\frac{2\,{z}^{2} \sigma^{3}(z)-1}
{ 2z \zeta(z)}.
\end{equation}
We earlier showed that each $\Upsilon_{s}(z)$ is a polynomial in $\sigma$ and rational function of $\zeta$. Thus from (\ref{eq98}) and (\ref{eq99}) all derivatives of $\zeta(z)$, $\sigma(z)$ and $\Upsilon_{s}(z)$ at $z=z_{m,0}$ can, via the chain rule, be expressed explicitly as rational functions of $z_{m,0}$, $\zeta_{m,0}$ and $\sigma_{m,0}$, where $\sigma_{m,0}=\sigma(z_{m,0})$. From (\ref{eq91}) it follows the same is true of the coefficients $z_{m,s}$ that we are evaluating.

Recall that $\zeta_{m,0}$ is given by (\ref{eq76}), $z_{m,0}$ is computed from (\ref{eq77}), and from (\ref{eq65}) we then have
\begin{equation}
\label{eq100}
\sigma_{m,0}
=\frac{1}{\nu^{1/3}}
\left(\frac{|\mathrm{a}_{m}|}
{z_{m,0}^{2}-1}\right)^{1/2}.
\end{equation}
It follows from (\ref{eq98}) and (\ref{eq99}) that
\begin{equation}
\label{eq101}
\zeta_{m,0}^{\prime\prime}
=\zeta''\left(z_{m,0}\right)
=\frac{2 z_{m,0}^{2} \, \sigma_{m,0}^{3}
+2 \sigma_{m,0} \, \zeta_{m,0}-1}
{2z_{m,0}^{2}\,\sigma_{m,0}^{2}\,\zeta_{m,0}},
\end{equation}
and from (\ref{eq67}), (\ref{eq98}) and (\ref{eq99})
\begin{equation}
\label{eq102}
\Upsilon_{m,1}^{\prime}
=\Upsilon_{1}'\left(z_{m,0}\right)
=\frac {30z_{m,0}^{2} \, \sigma_{m,0}^{6}
-6z_{m,0}^{2} \,\sigma_{m,0}^{4} \,\zeta_{m,0}
+5\sigma_{m,0}^{3}
-3\sigma_{m,0} \, \zeta_{m,0}-10}
{48z_{m,0} \, \sigma_{m,0} \,\zeta_{m,0}^{3}}.
\end{equation}

Hence from (\ref{eq94}) and (\ref{eq95}) the first two coefficients in our main expansion (\ref{eq93a}) are given explicitly by
\begin{equation}
\label{eq103}
z_{m,1}=
\frac{z_{m,0} \,\sigma_{m,0}}{48  
\zeta_{m,0}^{2}}
\left\{ 10\sigma_{m,0}^3
- 6\sigma_{m,0} \,\zeta_{m,0} - 5 \right\},
\end{equation}
and 
\begin{multline}
\label{eq105}
z_{m,2}=
\frac{z_{m,0} \,\sigma_{m,0}}{46080
\zeta_{m,0}^{5}}
\left\{
200\sigma_{m,0}^{9}\left( 35z_{m,0}^{2}
+221 \right) 
-80\sigma_{m,0}^{7} \, \zeta_{m,0}\left( 75z_{m,0}^{2}+982\right) 
\right.
\\
-4000\sigma_{m,0}^{6} \, z_{m,0}^{2}
+24\sigma_{m,0}^{5} \, \zeta_{m,0}^{2}\left( 45 z_{m,0}^{2}+1543\right)
+200\sigma_{m,0}^{4} \, \zeta_{m,0}
\left(6 z_{m,0}^{2}-5 \right) 
 \\
 \left.
+600\sigma_{m,0}^{2} \, \zeta_{m,0}^{2}
+10 \sigma_{m,0}^{3}\left(25 z_{m,0}^{2} 
-264 \zeta_{m,0}^{3}\right) 
+250  \sigma_{m,0} \,\zeta_{m,0}-5525
\right\}.
\end{multline}
All the other coefficients can be found in the same way, with the next two being given in the first Maple worksheet of \cref{secA}.

Note as $z_{m,0} \to 1$ ($\zeta_{m,0} \to 0$) which occurs when $\nu^{-2/3}\mathrm{a}_{m} \to 0$, for example $\nu \to \infty$ with $m$ is fixed, we have from (\ref{eq65}), (\ref{eq67}) - (\ref{eq70}), (\ref{eq77}) and (\ref{eq94}) - (\ref{eq99})
\begin{equation}
\label{eq106}
z_{m,1}=
\tfrac{1}{70} 
+\mathcal{O}\left(z_{m,0}-1\right),
\end{equation}
\begin{equation}
\label{eq107}
z_{m,2}=
-\tfrac{3781}{3185} \times 10^{-3}
+\mathcal{O}\left(z_{m,0}-1\right),
\end{equation}
\begin{equation}
\label{eq108}
z_{m,3}=
\tfrac{722735647}{163087925} 
\times 10^{-4}
+\mathcal{O}\left(z_{m,0}-1\right),
\end{equation}
and
\begin{equation}
\label{eq109}
z_{m,4}=
-\tfrac{56446083463751}{14841001175} 
\times 10^{-7}
+\mathcal{O}\left(z_{m,0}-1\right).
\end{equation}

We remark that there are cancellations of singularities at $z_{m,0}=1$, more so for the later coefficients. However we find numerically this is not an issue, since $z_{m,0}$ is never exactly equal to 1, and moreover it is close to that value only when $\nu$ is very large relative to $m$, and hence less accuracy is required for the later terms in the series (\ref{eq93a}).

Next, as $z_{m,0} \to \infty$ ($\zeta_{m,0} \to -\infty$), which occurs when $\nu^{-2/3}|\mathrm{a}_{m}| \to \infty$, for example $m \to \infty$ with $\nu$ fixed, we have
\begin{equation}
\label{eq110}
z_{m,1}=
\frac{1}{18 \, z_{m,0}} 
+\mathcal{O}\left(\frac{1}{z_{m,0}^{2}}\right),
\end{equation}
\begin{equation}
\label{eq111}
z_{m,2}=
-\frac{71}{1944 \, z_{m,0}^{3}} 
+\mathcal{O}\left(\frac{1}{z_{m,0}^{4}}\right),
\end{equation}
\begin{equation}
\label{eq112}
z_{m,3}=
\frac{6673}{58320 \, z_{m,0}^{5}} 
+\mathcal{O}\left(\frac{1}{z_{m,0}^{6}}\right),
\end{equation}
and
\begin{equation}
\label{eq113}
z_{m,4}=
-\frac{25500599}{29393280 \, z_{m,0}^{7}} 
+\mathcal{O}\left(\frac{1}{z_{m,0}^{8}}\right).
\end{equation}
Thus (\ref{eq106}) - (\ref{eq113}) illustrate the uniform nature of our approximations for $1<z_{m,0}<\infty$.

We mention that we could in place of (\ref{eq91}) use the Taylor series 
\begin{equation}
\label{eq114}
h\left(z_{m}\right) \sim 
h_{m,0}
+h_{m,0}^{\prime}
\sum_{s=1}^{\infty}
\frac{z_{m,s}}{\nu^{2s}}
+\frac{1}{2!}h_{m,0}^{\prime \prime}
\left\{\sum_{s=1}^{\infty}
\frac{z_{m,s}}{\nu^{2s}}\right\}^2
+ \ldots,
\end{equation}
in (\ref{eq84}) with $h=\zeta$ and $h=\Upsilon_{s}$. Then expand with the aid of \cite[Eqs. 1.9.60 - 1.9.62]{NIST:DLMF}, and equate like powers of $\nu^{-2s}$, set each coefficient of this series to zero, and arrive at (\ref{eq94}) - (\ref{eq97}). This method is useful to verify the first few terms given by (\ref{eq91}), but entails extensive algebraic manipulations, and does not give a recursion for the coefficients.

\subsection{Zeros of other Bessel functions}
\label{sec3.1}

From (\ref{eq62}) we find similarly to $J_{\nu}(\nu z)$ that uniform asymptotic expansions for the real zeros of $Y_{\nu}(\nu z)$ are the same as those given in \cref{thm3.1}, except in place of (\ref{eq76}) we take $\zeta_{m,0}=\nu^{-2/3}\mathrm{b}_{m}$, where $0>\mathrm{b}_{1}>\mathrm{b}_{2}>\ldots$ are the real zeros of $\mathrm{Bi}(x)$. The value of $z_{m,0}$ is also modified accordingly (see (\ref{eq77})). This also holds true for the complex zeros of $Y_{\nu}(\nu z)$ using the complex zeros of the $\mathrm{Bi}$ Airy function, but only for a finite number lying in $\Re(z) \geq -1+\delta$ ($\delta>0$); see our discussion on the zeros of the Hankel functions that follows next.

From (\ref{eq59}) the zeros of $H_{\nu}^{(j)}(\nu z)$ ($j=1,2$) that lie in the principal plane with $\Re(z) >-1$ also satisfy the same asymptotic expansions as in \cref{thm3.1}, except that this time
\begin{equation}
\label{eq93b}
\zeta_{m,0}
=e^{\mp 2\pi i/3}\nu^{-2/3}\mathrm{a}_{m},
\end{equation}
for $j=1,2$ respectively. Consequently in (\ref{eq77}) $|\mathrm{a}_{m}|^{3/2}$ must be replaced by $-|\mathrm{a}_{m}|^{3/2}$. Non-real solutions of this implicit equation must be found numerically, with the branches being such that $\Im(z_{m,0})<0$ for $j=1$ and $\Im(z_{m,0})>0$ for $j=2$.

For large $\nu$ these zeros lie close to the eye-shaped curve $\mathbf{K}$ depicted in \cite[Fig. 10.20.3]{NIST:DLMF}, in the lower half plane for $j=1$, with the conjugate of these for $j=2$. However, in this case $m$ must be truncated at a finite value, as the approximations break down near the second turning point at $z=-1\pm i0$ ($\zeta =e^{\mp \pi i/3}(3\pi/2)^{2/3}$). In particular for each given positive $\nu$ one requires $m$ be bounded via the inequality
\begin{equation}
\label{eq93c}
\frac{\left|\mathrm{a}_{m}\right|}{\nu^{2/3}}
\leq \left(\frac{3 \pi}{2}\right)^{2/3}-\delta
\quad (\delta>0).
\end{equation}

We can restrict the above described expansions to the half plane $|\arg(z)| \leq \frac12\pi$, and then consider the zeros of $H_{\nu}^{(1)}(\nu z)$ in $-\pi \leq \arg(z) \leq -\frac12\pi$ separately (with the zeros $H_{\nu}^{(2)}(\nu z)$ then obtained by complex conjugating). To this end use \cite[Eq. 10.11.4]{NIST:DLMF}
\begin{equation}
\label{eq93d}
H_{\nu}^{(1)}(\nu z e^{-\pi i})
=2\cos(\nu \pi)H_{\nu}^{(1)}(\nu z)
+e^{-\nu \pi i}H_{\nu}^{(2)}(\nu z),
\end{equation}
and consider this relation for $0 \leq \arg(z) \leq \frac12\pi$.
Then from (\ref{eq59}) and (\ref{eq93d})
\begin{equation}
\label{eq93e}
z^{1/2}H_{\nu}^{(1)}(\nu z e^{-\pi i})
=2\mathcal{Y}(\nu,z)
\widetilde{\mathrm{Ai}}\left(\nu, \nu^{2/3}
\mathcal{Z}(\nu,z)\right),
\end{equation}
where
\begin{equation}
\label{eq93f}
\widetilde{\mathrm{Ai}}(\nu,t)
=2\cos(\nu \pi)e^{-\pi i/3}
\mathrm{Ai}_{-1}(t)
+e^{-(3\nu-1 )\pi i/3}\mathrm{Ai}_{1}(t).
\end{equation}

Let $t=\widetilde{\mathrm{ai}}_{m}(\nu)$ denote the zeros of $\widetilde{\mathrm{Ai}}(\nu,t)$ in the part of the $t$ plane that is the mapping of the quadrant  $0 \leq \arg(z) \leq \frac12\pi$ under $t=\nu^{2/3}\mathcal{Z}(\nu,z)$, enumerated by decreasing values of $\Re(t)$ for $m=1,2,3,\ldots$. In this domain, for large $|t|$ a finite number lie close to the ray $\arg(t) = -\frac13\pi$, with $\widetilde{\mathrm{ai}}_{1}(\nu)$ (the zero with the largest real part) corresponding to the value of $\arg(z)$ closest to, but not exceeding, $\frac12\pi$. Further, if $\cos(\nu \pi) \neq 0$ there are an infinite number of zeros which approach the negative $t$ axis from below as $m \to \infty$; this can be shown from (\ref{eq93f}) and \cite[Eq. 9.7.5]{NIST:DLMF}. See \cite{Gil:2014:OCZ} for more details on the zeros of linear combinations of Airy functions of this kind. 

Then uniform asymptotic expansions for the zeros of $H_{\nu}^{(1)}(\nu z e^{-\pi i})$ lying in the quadrant $0 \leq \arg(z) \leq \frac12\pi$ are again given by (\ref{eq93a}), except now we have $\zeta_{m,0}=\nu^{-2/3}\widetilde{\mathrm{ai}}_{m}(\nu)$. For large $\nu$, and $z$ bounded away from the turning point, a finite number of these zeros lie close to the part of curve $\mathbf{K}$ in the first quadrant, as well as an infinite number that approach the line $\Im(z)=\frac{1}{2}\ln(2)$ as $\Re(z) \to \infty$, provided $\cos(\nu \pi) \neq 0$. As we mentioned, for the zeros of $H_{\nu}^{(2)}(\nu z  e^{\pi i})$ in $-\frac12\pi \leq \arg(z) \leq 0$ we simply take the conjugates of the zeros of $H_{\nu}^{(1)}(\nu z e^{-\pi i})$.

We also have from (\ref{eq60a}), (\ref{eq61}), (\ref{eq62}) and \cite[Eqs. 10.2.3, 10.4.4 and 10.4.6]{NIST:DLMF}
\begin{equation}
\label{eq63}
z^{1/2}J_{-\nu}(\nu z)
=\mathcal{Y}(\nu,z) \mathrm{Ai}
\left(\nu, \nu^{2/3}\mathcal{Z}(\nu,z)\right),
\end{equation}
where
\begin{multline}
\label{eq64}
\mathrm{Ai}(\nu,z)
=\cos(\nu \pi) \mathrm{Ai}(z)
+ \sin(\nu \pi) \mathrm{Bi}(z)
\\
=e^{\pm \nu \pi i}\mathrm{Ai}(z)
+2 e^{\mp \nu \pi i/6} 
\sin(\nu \pi)\mathrm{Ai}_{\pm 1}(z).
\end{multline}
For real zeros of $J_{-\nu}(\nu z)$ asymptotic expansions are again provided by a modified version \cref{thm3.1}, in particular using the negative zeros of $\mathrm{Ai}(\nu,z)$ found numerically (see \cite{Gil:2014:OCZ}). For the complex zeros of $\mathrm{Ai}(\nu,z)$ and related Airy functions see \cite{Segura:2013:CCZ}. 

More details on the distribution of the complex zeros of Bessel functions, including the case $\nu$ complex, and for those not lying in the principal plane, are given by Olver \cite{Olver:1954:AEB}.

\section{Zeros of the derivatives of Bessel functions}
\label{sec4}

Expansions for the zeros of the derivatives is done in a similar manner. Let $f(z)$ be the same function as previously, given by (\ref{eq50}), with $g(z)$ replaced by
\begin{equation}
\label{eq116}
\tilde{g}(z)=\frac{3z^4+10z^2-1}{4z^{2}
\left(1-z^2\right)^2}.
\end{equation}
Then from \cite[Eqs. (4.2) and (4.3)]{Dunster:2014:OEB} we have that
\begin{equation}
\label{eq117}
\tilde{w}=z^{3/2}\left(1-z^2\right)^{-1/2}
\mathcal{C}_{\nu}'(\nu z),
\end{equation}
where $\mathcal{C}_{\nu}(z)$ is any Bessel function, is a solution of the differential equation
\begin{equation}
\label{eq115}
\frac{d^{2} \tilde{w}}{dz^{2}}
=\left\{ \nu ^{2}f(z)+\tilde{g}(z)\right\} \tilde{w}.
\end{equation}
Note that $\tilde{g}(z)$ is not analytic at the turning point $z=1$, due to the factor $(1-z^2)^{-1/2}$ in (\ref{eq117}). Thus \cref{thm2.2} cannot be applied directly here, but we proceed in a similar manner, as follows.

Firstly, the Liouville transformation is the same as given by (\ref{eq51}). We derive the coefficients $\tilde{\mathrm{E}}_{s}(\beta)$ (say) in a similar way to  (\ref{eq55}) and (\ref{eq57}), again using \cite[Eqs. (60) - (63)]{Dunster:2022:LAS} but with $g(z)$ replaced by $\tilde{g}(z)$. As a result these are found to be given by
\begin{equation}
\label{eq120}
\tilde{\mathrm{E}}_{1}(\beta)
=-\tfrac{1}{24}\beta
\left(7\beta^{2}-9\right),
\quad
\tilde{\mathrm{E}}_{2}(\beta)=
-\tfrac{1}{16}\beta^{2}\left(\beta^{2}-1\right)
\left(7\beta^{2}-3\right),
\end{equation}
and for $s=2,3,4,\ldots$
\begin{equation}
\label{eq122}
\tilde{\mathrm{E}}_{s+1}(\beta) =
\frac{1}{2} \beta^{2} \left(\beta^{2}-1 \right)\tilde{\mathrm{E}}_{s}^{\prime}(\beta)
+\frac{1}{2}\int_{0}^{\beta}
p^{2}\left(p^{2}-1 \right)
\sum_{j=1}^{s-1}
\tilde{\mathrm{E}}_{j}^{\prime}(p)
\tilde{\mathrm{E}}_{s-j}^{\prime}(p) dp.
\end{equation}
where $\beta$ is again defined by (\ref{eq53}). Thus in place of (\ref{eq11}) and (\ref{eq12}) we have the corresponding LG expansions
\begin{equation}
\label{eq127}
\tilde{W}_{0}(\nu,z) 
\sim \frac{1}{f^{1/4}(z)}
\exp \left\{ -\nu\xi
+\sum\limits_{s=1}^{\infty}{(-1) ^{s}
\frac{\tilde{\mathrm{E}}_{s}(\beta)}{\nu^{s}}}\right\},
\end{equation}
and
\begin{equation}
\label{eq128}
\tilde{W}_{\pm 1}(\nu,z)  
\sim \frac{1}{f^{1/4}(z)}
\exp \left\{ \nu\xi
+\sum\limits_{s=1}^{\infty}
\frac{\tilde{\mathrm{E}}_{s}(\beta)}
{\nu^{s}}\right\},
\end{equation}
as $\nu \to \infty$, uniformly in the same regions as the corresponding expansions (\ref{eq55}) and (\ref{eq57}).

The identification with the derivatives of the Bessel functions is as follows. Firstly, for the solutions recessive at $z=0$ ($\xi = +\infty$), we have
\begin{equation}
\label{eq131}
z^{3/2}\left(1-z^2\right)^{-1/2}
J_{\nu}'(\nu z)
= \tilde{C}_{0}(\nu) \tilde{W}_{0}(\nu,z),
\end{equation}
where the proportionality constant $\tilde{C}_{0}(\nu)$ can conveniently be found by comparing both sides as $|z| \to \infty$, bearing in mind that in this case $\tilde{\mathrm{E}}_{s}(\beta) \to \tilde{\mathrm{E}}_{s}(0)=0$. To this end, from (\ref{eq51}) as $z \to \pm i \infty$
\begin{equation}
\label{eq129}
\xi = \pm iz \mp  \tfrac12\pi i
+\mathcal{O}\left(z^{-1} \right),
\end{equation}
and from (\ref{eq50}) $f^{1/4}(z) = e^{ \mp \pi i/4}+ \mathcal{O}(z^{-2})$. Now from \cite[Eq. 10.17.9]{NIST:DLMF}, certainly for $|\arg(z)| \leq \frac12 \pi$, 
\begin{equation}
\label{eq130}
J_{\nu}'(\nu z)
= -\left(\frac{2}{\pi \nu z}\right)^{1/2}
\left\{\sin\left(\nu z-\frac{1}{2}\nu \pi-\frac{1}{4}\pi\right)
+\mathcal{O}\left( \frac{1}{z} \right)\right\}
\quad (z \to \infty).
\end{equation}

If we let $z \to i \infty$ (or $z \to -i \infty$) then from (\ref{eq127}), (\ref{eq131}), (\ref{eq129}) and (\ref{eq130}) we arrive at $\tilde{C}_{0}(\nu)=1/\sqrt{2 \pi \nu}$, and hence from (\ref{eq50}), (\ref{eq127}) and (\ref{eq131})
\begin{equation}
\label{eq132}
J_{\nu}'(\nu z)
\sim 
\frac{\left(1-z^2\right)^{1/4}}
{\sqrt{2 \pi \nu} \, z}
\exp \left\{ -\nu\xi
+\sum\limits_{s=1}^{\infty}{(-1) ^{s}
\frac{\tilde{\mathrm{E}}_{s}(\beta)}{\nu^{s}}}\right\},
\end{equation}
as $\nu \to \infty$, uniformly for $z \in Z_{0}$.

In a similar manner for solutions recessive at $z=i\infty$ (see \cite[Eq. 10.17.11]{NIST:DLMF}) one can show that as $\nu \to \infty$
\begin{equation}
\label{eq134}
{H_{\nu}^{(1)}}'(\nu z)
\sim i \sqrt{\frac{2}{\pi \nu}}
\frac{\left(1-z^2\right)^{1/4}}{z}
\exp \left\{ \nu\xi
+\sum\limits_{s=1}^{\infty}
\frac{\tilde{\mathrm{E}}_{s}(\beta)}
{\nu^{s}}\right\},
\end{equation}
uniformly for $z \in Z_{-1}$, and likewise
\begin{equation}
\label{eq135}
{H_{\nu}^{(2)}}'(\nu z)
\sim -i \sqrt{\frac{2}{\pi \nu}}
\frac{\left(1-z^2\right)^{1/4}}{z}
\exp \left\{ \nu\xi
+\sum\limits_{s=1}^{\infty}
\frac{\tilde{\mathrm{E}}_{s}(\beta)}
{\nu^{s}}\right\},
\end{equation}
uniformly for $z \in Z_{1}$.


Based on (\ref{eq59}) we define $\tilde{\mathcal{Y}}(\nu,z)$ and $\tilde{\mathcal{Z}}(\nu,z)$ via the pair of equations
\begin{equation}
\label{eq123}
{H_{\nu}^{(1,2)}}'(\nu z)
=2 e^{\mp \pi i/3}
\tilde{\mathcal{Y}}(\nu,z)
\mathrm{Ai}_{\mp 1}^{\prime}
\left( \nu^{2/3}
\tilde{\mathcal{Z}}(\nu,z)\right).
\end{equation}
Using these and the derivative of the Airy connection formula (\ref{eq16}) we then get
\begin{equation}
\label{eq126}
J_{\nu}'(\nu z)
=\tilde{\mathcal{Y}}(\nu,z)
\mathrm{Ai}^{\prime}
\left( \nu^{2/3}
\tilde{\mathcal{Z}}(\nu,z)\right).
\end{equation}
Compare this with (\ref{eq61}) as well as \cite[Eq. (4.15)]{Dunster:2014:OEB}.

Again our main focus is the asymptotic expansion of $\tilde{\mathcal{Z}}(\nu,z)$, since this will yield the expansions for the zeros. We can again eliminate $\tilde{\mathcal{Y}}(\nu,z)$ in a similar manner, by dividing any pair from the three equations (\ref{eq123}) and (\ref{eq126}). For example
\begin{equation}
\label{eq137}
\frac{{H_{\nu}^{(1)}}'(\nu z)}{J_{\nu}'(\nu z)}
=\frac{2 e^{-\pi i/3}
\mathrm{Ai}_{-1}^{\prime}
\left( \nu^{2/3}
\tilde{\mathcal{Z}}(\nu,z)\right)}
{\mathrm{Ai}^{\prime}
\left( \nu^{2/3}
\tilde{\mathcal{Z}}(\nu,z)\right)}.
\end{equation}

We require expansions for the derivatives of Airy functions, of the form (\ref{eq25}) and (\ref{eq26}). These involve the sequence $\{\tilde{a}_{s}\}_{s=1}^{\infty}$ defined recursively by $\tilde{a}_{1}=\tilde{a}_{2}=-\frac{7}{72}$, and subsequent terms $\tilde{{a}}_{s}$ ($s=3,4,5,\ldots $) from the formula (\ref{eq24}) with $a$ replaced by $\tilde{a}$. Thus let
\begin{equation}
\label{eq141}
\tilde{\mathcal{X}}
=\tilde{\mathcal{X}}(\nu,z) 
=\tfrac{2}{3}\tilde{\mathcal{Z}}^{3/2}(\nu,z),
\end{equation}
and then from \cite[Thm. 2.4]{Dunster:2021:SEB} as $\nu \tilde{\mathcal{X}} \to \infty$ for $\delta>0$
\begin{equation}
\label{eq138}
\mathrm{Ai}^{\prime }\left(\nu^{2/3}\tilde{\mathcal{Z}}\right) 
\sim  -\frac{\nu^{1/6}\tilde{\mathcal{Z}} ^{1/4}}
{2\pi ^{1/2}}\exp \left\{-\nu\tilde{\mathcal{X}} 
+\sum\limits_{s=1}^{\infty}{(-1)^{s}
\frac{\tilde{a}_{s}}
{s (\nu\tilde{\mathcal{X}})^{s}}}\right\}
\quad \left(|\arg(\tilde{\mathcal{Z}})| \leq \pi-\delta \right),
\end{equation}
and 
\begin{multline}
\label{eq139}
\mathrm{Ai}_{\pm 1}^{\prime }
\left(\nu^{2/3}\tilde{\mathcal{Z}} \right) 
\sim \frac{e^{\pm \pi i/6}\nu^{1/6}
\tilde{\mathcal{Z}} ^{1/4}}{2\pi ^{1/2}}
\exp \left\{ \nu\tilde{\mathcal{X}}
+\sum\limits_{s=1}^{\infty}
\frac{\tilde{a}_{s}}{s 
(\nu\tilde{\mathcal{X}})^{s}}\right\}
\\
\quad \left(|\arg(\tilde{\mathcal{Z}} 
e^{\mp 2\pi i/3})| \leq \pi-\delta \right).
\end{multline}
Then similarly to (\ref{eq31}), using (\ref{eq132}) - (\ref{eq139}), one obtains
\begin{equation}
\label{eq140}
\nu\left\{\tilde{\mathcal{X}}(\nu,z)-\xi \right\}
+\sum\limits_{s=0}^{\infty}\frac{
\tilde{a}_{2s+1}}{(2s+1) 
\left\{\nu \tilde{\mathcal{X}}(\nu,z)\right\}^{2s+1}}
-\sum\limits_{s=0}^{\infty}
\frac{\tilde{\mathrm{E}}_{2s+1}(\beta)}{\nu^{2s+1}}
\sim  0.
\end{equation}

Consequently the expansion for $\tilde{\mathcal{Z}}(\nu,z)$ is of the same form as (\ref{eq32a}), namely
\begin{equation}
\label{eq142}
\tilde{\mathcal{Z}}(\nu,z)  
\sim  \zeta
+\sum_{s=1}^{\infty}
\frac{\tilde{\Upsilon}_{s}(z)}{\nu^{2s}}.
\end{equation}
The coefficients $\tilde{\Upsilon}_{s}(z)$ here are determined in the same manner as \cref{thm2.2}, but with the LG coefficients $\hat{\mathrm{E}}_{s}(\beta)$ replaced by $\tilde{\mathrm{E}}_{s}(\beta)$ (given by (\ref{eq120}) and (\ref{eq122})), and in (\ref{eq38}) $a_{2k+1}$ replaced by $\tilde{a}_{2k+1}$. 

As a result we find for the first four
\begin{equation}
\label{eq143}
\tilde{\Upsilon}_{1}(z)
=-\frac{1}{ 48\zeta^{2}}
\left\{14 \sigma^{3}-18\sigma\zeta-7
\right\},
\end{equation}
\begin{multline}
\label{eq144}
\tilde{\Upsilon}_{2}(z)
=-\frac{1}{ 11520\zeta^{5}}
\left\{
14630\sigma^{9}-28350\sigma^{7}\zeta
+15642\sigma^{5}\zeta^{2}
+245\sigma^{6}
-630\sigma^{4}\zeta
\right.
\\
\left.
+405\sigma^2\zeta^2
-70 \left( 27\zeta^{3}+14 \right) \sigma^{3}
+1260\sigma\zeta-1400
\right\},
\end{multline}
\begin{multline}
\label{eq145}
\tilde{\Upsilon}_{3}(z)
=-\frac{1}{5806080 \zeta^{8}}
\left\{
187212438\sigma^{15}-574909650\sigma^{13}\zeta
+643669740{\sigma}^{11}\zeta^{2}
\right.
\\
+1075305\sigma^{12}-3466260\sigma^{10}\zeta
+3828762\sigma^{8}\zeta^{2}
-700 \left( 445923\zeta^{3}+3038 \right)\sigma^{9}
\\
+270 \left( 216009\zeta^{3}+15092 \right)\sigma^{7}\zeta
-378\left( 5697\zeta^{3}+5768\right)\sigma^{5}\zeta^{2}
\\
-21\left( 77004\zeta^{3}+12005\right) \sigma^{6}
 +6615 \left( 27\zeta^{3}+98\right)\sigma^{4}\zeta
 -416745\sigma^{2}\zeta^{2}
\\
\left.
+4200 \left( 54\zeta^{3}-245 \right) \sigma^{3}
+1323000\sigma\zeta-5074244
\right\},
\end{multline}
and
\begin{multline}
\label{eq146}
\tilde{\Upsilon}_{4}(z)
=-\frac{1}{2786918400 \zeta^{11}}
\left\{
5855137662150\sigma^{21}
-24861164091750\sigma^{19}\zeta
\right.
\\
+42460124455350\sigma^{17}\zeta^{2}
+14228564385\sigma^{18}
-61447780170\sigma^{16}\zeta
+103421139255\sigma^{14}\zeta^{2}
\\
-210 \left( 176374197387\zeta^{3}+124091422 \right) \sigma^{15}
+9450 \left( 1823471757\zeta^{3}
+8445346 \right) \sigma^{13}\zeta
\\
-1890 \left( 2126608065\zeta^{3}+47065508 \right) \sigma^
{11}\zeta^{2}
-175 \left( 484154676\zeta^{3}+6004901 \right)\sigma^{12}
\\
+63 \left( 540045297\zeta^{3}
+53679500 \right)\sigma^{10}\zeta
-270 \left( 21148803\zeta^{3}+13788943 \right) 
\sigma^{8}\zeta^{2}
\\
+70 \left( 5408136315\zeta^{6}
+611212644\zeta^{3}-31563350 \right) \sigma^{9}
\\
-270 \left( 24456735{\zeta}^{6}+29070594\zeta^{3}
-16275350 \right) \sigma^{7}\zeta
+26460 \left( 10179\zeta^{3}-97925 \right) 
\sigma^{5}\zeta^{2}
\\
+105 \left( 2024433\zeta^{6}+14854644\zeta^{3}
-4459000\right) \sigma^{6}
-33075 \left( 5049\zeta^{3}-36400 \right) 
\sigma^{4}\zeta
\\
\left.
-773955000\sigma^{2}\zeta^{2}
+6860 \left(58725\zeta^{3}-828448 \right) \sigma^{3}
+7306911360\sigma\zeta-41850751040
\right\},
\end{multline}
where $\sigma$ is again defined by (\ref{eq65}).

Next from Airy's equation \cite[Eq. 9.2.1]{NIST:DLMF} a simple calculation gives
\begin{equation}
\label{eq22a}
\frac{d}{dt} \left\{
\frac{\mathrm{Ai}_{j}^{\prime}(t)}
{\mathrm{Ai}_{k}^{\prime}(t)}\right\}
=\frac{t \mathscr{W}\left\{\mathrm{Ai}_{j}(t)
\mathrm{Ai}_{k}(t)\right\}}
{\left\{\mathrm{Ai}_{k}^{\prime}(t)\right\}^{2}},
\end{equation}
which in contrast to (\ref{eq22}) vanishes at $t=0$. Now from (\ref{eq137}) and \cite[Eqs. 10.19.12 and 10.19.13]{NIST:DLMF} for large $\nu$ we have $z \approx 1$ when $\tilde{\mathcal{Z}}(\nu,z)=0$, and hence \cite[Thm. 2.1]{Fabijonas:1999:ORA} does not guarantee that $\tilde{\mathcal{Z}}(\nu,z)$, as given implicitly by (\ref{eq137}), is analytic near $z=1$. Indeed as $z \to 1$ ($\zeta \to 0$) we find from (\ref{eq51}), (\ref{eq65}) and (\ref{eq143})
\begin{equation}
\label{eq147}
\tilde{\Upsilon}_{1}(z)
\sim \frac{2^{1/3}}{10(1-z)},
\end{equation}
with the other coefficients also having higher order poles at $z=1$, unlike those for $\mathcal{Z}(\nu,z)$ (cf. (\ref{eq71})- (\ref{eq74})).  We also anticipate lack of analyticity near $z=1$ from (\ref{eq116}), and indeed this was discussed in \cite{Dunster:2014:OEB}. As we shall see this is not a major problem, as the zeros will not be too close to $z=1$; see (\ref{eq106a}) - (\ref{eq109a}) below and the discussion proceeding them.



Now let us consider the zeros $\tilde{z}_{m}$ ($m=1,2,3,\ldots$) which satisfy
\begin{equation}
\label{eq148}
J_{\nu}'\left(\nu \tilde{z}_{m} \right) = 0.
\end{equation}
From (\ref{eq126}) we have 
\begin{equation}
\label{eq149}
\mathcal{Z}(\nu,\tilde{z}_{m})
= \nu^{-2/3}{\mathrm{a}}^{\prime}_{m},
\end{equation}
where ${\mathrm{a}}^{\prime}_{m}$ ($m=1,2,3,\ldots$) are the negative zeros of $\mathrm{Ai}^{\prime}(x)$ ordered by increasing absolute values; cf. (\ref{eq82a}).

Similarly to the non-derivative zeros of the previous section we have for large $\nu$ and any positive integer $m$ the first order approximation $\tilde{z}_{m} \approx  \tilde{z}_{m,0}$, where this time $\tilde{z}_{m,0}$ satisfies
\begin{equation}
\label{eq150}
\zeta\left(\tilde{z}_{m,0}\right)
= \nu^{-2/3}{\mathrm{a}}^{\prime}_{m} := \tilde{\zeta}_{m,0},
\end{equation}
and so in place of (\ref{eq77})
\begin{equation}
\label{eq151}
\int_{1}^{\tilde{z}_{m,0}}
\frac{\left(t^{2}-1\right)^{1/2}}{t}\,dt
=\sqrt{\tilde{z}_{m,0}^{2}-1}
-\mathrm{arcsec}\left( \tilde{z}_{m,0} \right)
=\frac{2}{3 \nu}
\left|{\mathrm{a}}^{\prime}_{m}\right|^{3/2}.
\end{equation}

The derivation of the asymptotic expansion for the zeros in the present case is analogous to \cref{sec3}, and we arrive at
\begin{equation}
\label{eq155}
\tilde{z}_{m} \sim \tilde{z}_{m,0}
+\sum_{s=1}^{\infty}
\frac{\tilde{z}_{m,s}}{\nu^{2s}}
\quad (\nu \to \infty),
\end{equation}
where $\tilde{z}_{m,s}$ ($s=1,2,3,\ldots$) are determined the same way as $z_{m,s}$ (see \cref{thm3.1}), but with the differing initial values $\tilde{z}_{m,0}$ and $\tilde{\zeta}_{m,0}$ as given above, and the functions $\Upsilon_{s}(z)$ replaced by $\tilde{\Upsilon}_{s}(z)$. We remark that Olver \cite[\S 7]{Olver:1954:AEB} demonstrated the existence of such an expansion.

Thus if we denote $\tilde{\sigma}_{m,0}=\sigma(\tilde{z}_{m,0})$,
\begin{equation}
\label{eq153}
\tilde{\zeta}_{m,0}^{\prime}
=\zeta'\left(\tilde{z}_{m,0}\right),  \;
\tilde{\zeta}_{m,0}^{\prime \prime}
=\zeta''\left(\tilde{z}_{m,0}\right),  \;
\ldots,
\end{equation}
and for $s=1,2,3,\ldots$
\begin{equation}
\label{eq154}
\tilde{\Upsilon}_{m,s}
=\tilde{\Upsilon}_{s}\left(\tilde{z}_{m,0}\right), \;
\tilde{\Upsilon}_{m,s}^{\prime}
=\tilde{\Upsilon}_{s}'\left(\tilde{z}_{m,0}\right),  \;
\tilde{\Upsilon}_{m,s}^{\prime \prime}
=\tilde{\Upsilon}_{s}''\left(\tilde{z}_{m,0}\right),  \;
\ldots,
\end{equation}
we find that $\tilde{z}_{m,s}$ ($s=1,2,3,4$) are also given by (\ref{eq94}) - (\ref{eq97}), but with $z$, $\zeta$ and $\Upsilon$ replaced by $\tilde{z}$, $\tilde{\zeta}$ and $\tilde{\Upsilon}$, respectively. Therefore using (\ref{eq143}) and (\ref{eq144}) we have for the first two
\begin{equation}
\label{eq156a}
\tilde{z}_{m,1}=
-\frac{\tilde{z}_{m,0} \tilde{\sigma}_{m,0}}{48  
\tilde{\zeta}_{m,0}^{2}}
\left\{ 14\tilde{\sigma}_{m,0}^3
- 18\tilde{\sigma}_{m,0} \tilde{\zeta}_{m,0} - 7 \right\},
\end{equation}
and
\begin{multline}
\label{eq157a}
\tilde{z}_{m,2}=
\frac{\tilde{z}_{m,0} \,\tilde{\sigma}_{m,0}}{46080
\tilde{\zeta}_{m,0}^{5}}
\left\{
280\tilde{\sigma}_{m,0}^{9}\left(49\tilde{z}_{m,0}^{2}
-209\right) 
-560\tilde{\sigma}_{m,0}^{7} \, \tilde{\zeta}_{m,0}\left( 45\tilde{z}_{m,0}^{2}-206\right) 
\right.
\\
-7840\tilde{\sigma}_{m,0}^{6} \, \tilde{z}_{m,0}^{2}
+216\tilde{\sigma}_{m,0}^{5} \, \tilde{\zeta}_{m,0}^{2}\left( 45 \tilde{z}_{m,0}^{2}-313\right)
+280\tilde{\sigma}_{m,0}^{4} \, \tilde{\zeta}_{m,0}
\left(18 \tilde{z}_{m,0}^{2}-7 \right) 
 \\
 \left.
+2520\tilde{\sigma}_{m,0}^{2} \, \tilde{\zeta}_{m,0}^{2}
+10 \tilde{\sigma}_{m,0}^{3}
\left( 49\tilde{z}_{m,0}^{2} 
+ 1080\tilde{\zeta}_{m,0}^{3}\right) 
+490\tilde{\sigma}_{m,0} \,\tilde{\zeta}_{m,0}+7315
\right\},
\end{multline}
which can be compared to (\ref{eq103}) and (\ref{eq105}). The next two are given in the second Maple worksheet of \cref{secA}.

In contrast to (\ref{eq106}) - (\ref{eq109}), we find as $\tilde{z}_{m,0} \to 1$ ($\tilde{\zeta}_{m,0} \to 0$)
\begin{equation}
\label{eq106a}
\tilde{z}_{m,1}=
-\tfrac{1}{10} \left(\tilde{z}_{m,0}-1\right)^{-1}
\left\{1
+\mathcal{O}\left(\tilde{z}_{m,0}-1\right)\right\},
\end{equation}
\begin{equation}
\label{eq107a}
\tilde{z}_{m,2}=
-\tfrac{1}{200} \left(\tilde{z}_{m,0}-1\right)^{-3}
\left\{1
+\mathcal{O}\left(\tilde{z}_{m,0}-1\right)\right\},
\end{equation}
\begin{equation}
\label{eq108a}
\tilde{z}_{m,3}=
-\tfrac{1}{2000} \left(\tilde{z}_{m,0}-1\right)^{-5}
\left\{1
+\mathcal{O}\left(\tilde{z}_{m,0}-1\right)\right\},
\end{equation}
and
\begin{equation}
\label{eq109a}
\tilde{z}_{m,4}=
-\tfrac{1}{16000} \left(\tilde{z}_{m,0}-1\right)^{-7}
\left\{1
+\mathcal{O}\left(\tilde{z}_{m,0}-1\right)\right\}.
\end{equation}

On the other hand, one can derive sharp estimates similar to (\ref{eq106}) - (\ref{eq106}) as $\tilde{z}_{m,0} \to \infty$, namely $\tilde{z}_{m,s} \sim (-1)^{s}k_{s}\{z_{m,0}\}^{-2s+1}$ ($s=1,2,3,4$), where
\begin{equation}
\label{eq109b}
k_{1} = \tfrac{5}{18}, \quad
k_{2} = \tfrac{25}{1944}, \quad
k_{3} = \tfrac{1465}{11664}, \quad
k_{4} = \tfrac{5354165}{5878656}.
\end{equation}

Now $\tilde{z}_{m,0} \to 1$ when $\nu \to \infty$ with $m$ bounded. In this case from (\ref{eq150}) and (\ref{eq151}) it is straightforward to verify that $(\tilde{z}_{m}-1)^{-1}$ is $\mathcal{O}(\nu^{2/3})$ (cf. (\ref{eq76}) and (\ref{eq78})). Hence in this ``worst'' case the first four terms in the series of (\ref{eq155}) are $\mathcal{O}(\nu^{-2(s+1)/3})$, as opposed to being $\mathcal{O}(\nu^{-2s})$ when $\tilde{z}_{m,0}$ is not close to $1$. With this observation, and taking in account the small leading constants in (\ref{eq106a}) - (\ref{eq109a}), we can still expect good accuracy, but not at the same level as for the approximations in the previous section for comparable small $m$ and large $\nu$ values. This will be illustrated in the next section.

We finally remark that expansions for the derivatives of the other Bessel functions can be derived in the same way as described in \cref{sec3.1}.

\section{Numerical examples}
\label{sec5}

We compute scaled versions of $J_{\nu}(\nu z)$ and $J_{\nu}'(\nu z)$ to give a more precise estimate of the error of our approximations to their zeros. Thus from \cite[Eqs. 10.19.6 and 10.19.7]{NIST:DLMF} let us define
\begin{equation}
\label{eq160}
\mathbb{J}(\nu,z):=
\sqrt{\frac{\pi \nu}{2}}
\left(z^2-1\right)^{1/4}
J_{\nu}(\nu z)
=\cos\left(\nu \xi -\tfrac{1}{4}\pi\right)
+\mathcal{O}\left(\frac{1}{\nu z}\right),
\end{equation}
and
\begin{equation}
\label{eq161}
\tilde{\mathbb{J}}(\nu,z):=
\sqrt{\frac{\pi \nu}{2}}
\frac{z J_{\nu}'(\nu z)}{\left(z^2-1\right)^{1/4}}
=-\sin\left(\nu \xi -\tfrac{1}{4}\pi\right)
+\mathcal{O}\left(\frac{1}{\nu z}\right),
\end{equation}
with $\xi$ being given by (\ref{eq51}), and where the $O$ terms hold uniformly for $\nu \to \infty$ and $1+\delta \leq z < \infty$ ($\delta>0$). In (\ref{eq161}) we compute the derivative via the well-known relation \cite[Eq. 10.6.2]{NIST:DLMF}
\begin{equation}
\label{eq162}
J_{\nu}'(x)
=(\nu/x)J_{\nu}(x)
-J_{\nu+1}(x)
\quad (x \neq 0).
\end{equation}

Taking the first five terms in (\ref{eq93a}) we have for unrestricted $m=1,2,3,\ldots$ the uniform asymptotic approximation for one or both of $\nu$ and $m$ large
\begin{equation}
\label{eq163}
z_{m} \approx 
z_{4}(\nu,m)
:= z_{m,0}+\frac{z_{m,1}}{\nu^{2}}
+\frac{z_{m,2}}{\nu^{4}}
+\frac{z_{m,3}}{\nu^{6}}
+\frac{z_{m,4}}{\nu^{8}}.
\end{equation}
In \cref{table:1} values of $\mathbb{J}(\nu,z_{4}(\nu,m))$ for various values of $\nu$ and $m$ are tabulated. We see very good approximation for all values of the parameters except $\nu=m=1$, and this exception is not surprising since our uniform approximations are valid for one or both of $\nu$ and $m$ large. Nevertheless, even in this case we do observe about 6 digits of accuracy.

We remark that the Airy zeros used in (\ref{eq77}) and (\ref{eq81}) to evaluate $z_{m,0}$ were computed with the Maple command AiryAiZeros(m). However to speed up computation even further these values could be precomputed and stored \cite{Fabijonas:2004:CCA}. Use of an asymptotic expansion \cite{Fabijonas:1999:ORA} is also efficient for large values of $m$, and in particular for complex zeros (such as those used for the Hankel function zeros).


\begin{table}[ht]
\centering
\begin{adjustbox}{width=1\textwidth}
\begin{tabular}{|c|| c |c| c| c| c|} 
 \hline
 \\[-1em]
$\mathbb{J}(\nu,z_{4}(\nu,m))$ 
& $m=1$ & $m=5$ & $m=10$ & $m=100$ & $m=1000$ \\ [0.5ex] 
 \hline\hline
 \\[-1em]
  $\nu=1$ 
  & 1.5166 e-06
  & 5.5411 e-11 
  &-2.0881 e-13 
  & -3.8278 e-22 
  & -4.0685 e-31\\
   \hline
   \\[-1em]
   $\nu=5$ 
   & 4.8691 e-11
   & 4.8816 e-13 
   & -1.2090 e-14 
   & -2.6932 e-22  
   & -3.9246 e-31 \\
   \hline
   \\[-1em]
   $\nu=10$ 
   & 1.6998 e-13 
   & 1.1644 e-14 
   & -8.6674 e-16 
   & -1.7677 e-22  
   & -3.7526 e-31 \\
   \hline
   \\[-1em]
   $\nu=100$ 
   & 1.9854 e-22 
   & 2.0479 e-22 
   & -1.4881 e-22 
   & -7.8795 e-25  
   & -1.7358 e-31 \\
   \hline
   \\[-1em]
 $\nu=1000$ 
 & 1.0908 e-31 
 & 1.7930 e-31 
 & -2.0627 e-31 
 & -1.4651 e-31  
 & -7.8009 e-36 \\ [1ex] 
 \hline
\end{tabular}
\end{adjustbox}
\vspace{5pt}
\caption{$\mathbb{J}(\nu,z)$ computed at the approximate zero $z=z_{4}(\nu,m)$ for various values of $\nu$ and $m$.}
\label{table:1}
\end{table}

As a further check on the Maple Bessel function routines, and in particular our choice of the scaling factors, we found for all values of $\nu$ and $m$ in \cref{table:1} that $\tilde{\mathbb{J}}_{\nu}(z_{4}(\nu,m))\approx (-1)^{m}$ as expected from (\ref{eq161}) and the known alternating sign of $J_{\nu}'(\nu z_{m})$. More precisely we found that $|\tilde{\mathbb{J}}_{\nu}(z_{4}(\nu,m))-(-1)^{m}|<0.0057$. Note that we certainly do not expect that this difference be very small.


\begin{table}[ht]
\centering
\begin{adjustbox}{width=1\textwidth}
\begin{tabular}{|c|| c |c| c| c| c|} 
 \hline
 \\[-1em]
$\tilde{\mathbb{J}}(\nu,\tilde{z}_{4}(\nu,m))$ 
& $m=1$ & $m=5$ & $m=10$ & $m=100$ & $m=1000$ \\ [0.5ex] 
 \hline\hline
 \\[-1em]
  $\nu=1$ 
  &  -6.1638 e-05 
  &  -1.3266 e-10 
  &  3.3843 e-13 
  &  4.2065 e-22 
  &  4.2923 e-31\\
   \hline
   \\[-1em]
   $\nu=5$ 
   &  -2.5791 e-07
   &  -8.6953 e-13 
   &  1.7800 e-14 
   &  2.9612 e-22  
   &  4.1413 e-31 \\
   \hline
   \\[-1em]
   $\nu=10$ 
   &  -2.8314 e-08 
   &  -1.8659 e-14 
   &   1.2075 e-15 
   &   1.9451 e-22  
   &   3.9608 e-31 \\
   \hline
   \\[-1em]
   $\nu=100$ 
   &  -1.4954 e-11 
   &  -5.9170 e-19 
   &  5.9462 e-21 
   &  8.8919 e-25  
   &  1.8405 e-31 \\
   \hline
   \\[-1em]
 $\nu=1000$ 
 &  -7.1291 e-15 
 &  -3.4984 e-22 
 &   4.0750 e-24 
 &  3.9395 e-30
 &  8.6458 e-34 \\ [1ex] 
 \hline
\end{tabular}
\end{adjustbox}
\vspace{5pt}
\caption{$\tilde{\mathbb{J}}(\nu,z)$ computed at the approximate zero $z=\tilde{z}_{4}(\nu,m)$ for various values of $\nu$ and $m$.}
\label{table:2}
\end{table}

In \cref{table:2} computed values of $\tilde{\mathbb{J}}(\nu,\tilde{z}_{4}(\nu,m))$ are given for various values of $\nu$ and $m$. Here $\tilde{z}_{4}(\nu,m)$, our approximation to the $m$th exact zero $\tilde{z}_{m}$ of $J_{\nu}^{\prime}(\nu z)$, is given by (\ref{eq163}) with $z_{m,s}$ replaced by $\tilde{z}_{m,s}$ ($s=0,1,2,3,4$). As remarked at the end of \cref{sec4} as expected we observe less accurate values when $\nu$ is large and $m$ is small as compared to \cref{table:1}, but not significantly so. We see this when comparing the $m=1$ columns of both tables, and to a lesser extent the $m=5$ columns.

In evaluating the leading term $\tilde{z}_{m,0}$ via (\ref{eq151}) we solved numerically for the zeros of the derivative of the Airy function, since Maple does not have a built in routine for this. Now from \cite{Pittaluga:1991:IZA} it was shown that the asymptotic expansions for the zeros of $\mathrm{Ai}(x)$ \cite[Eqs. 9.9.6 and 9.9.18]{NIST:DLMF} when truncated after one to five of the leading terms provide upper and lower bounds for the zeros (Nemes \cite{Nemes:2021:PTC} extended this to all terms). From this, and the interlacing of the zeros of $\mathrm{Ai}(x)$ and $\mathrm{Ai}'(x)$, we have
\begin{equation}
\label{eq164}
-t_{m}^{2/3}\left(1
+\tfrac{5}{48}t_{m}^{-2}\right)
< \mathrm{a}_{m}
<{\mathrm{a}}^{\prime}_{m}
<\mathrm{a}_{m-1}
< -t_{m-1}^{2/3}
\quad (m=2,3,4,\ldots),
\end{equation}
where $t_{m}=\frac38(4m-1)\pi$. Consequently, on including $\mathrm{a}_{1}<{\mathrm{a}}^{\prime}_{1}<0$, we were able to readily numerically solve for the $m$th zero of $\mathrm{Ai}'(x)$ in the interval $(x_{1,m},x_{2,m})$ where
\begin{equation}
\label{eq165a}
x_{1,m}=-\tfrac{1}{4}\{3(4m-1)\pi\}^{2/3}
\left(1+\tfrac{20}{27}
\{(4m-1)\pi\}^{-2}\right),
\end{equation}
and
\begin{equation}
\label{eq165b}
x_{2,m}=-\tfrac{1}{4}
\{3\max(4m-5,0)\pi\}^{2/3},
\end{equation}
with the $\max$ function being utilized in consideration of the case $m=1$. Again a large number of these zeros could be precomputed and stored.

\appendix
\section{Maple worksheets} 
\label{secA}

\vspace{6pt}

The following Maple worksheet computes the approximation $z_{4}(\nu,m)$ to $z_{m}$ as given by (\ref{eq163}). The notation used here is $zm[j]=z_{m,j}$, ($j=1,2,3,4$), $X=\sigma_{m,0}$, $Y=\zeta_{m,0}$, $Z=z_{m,0}$ and $Am=\mathrm{a}_{m}$.

\vspace{12pt}

\begin{verbatim}
# Input Digits, nu and m

Digits:=20;
nu=10;
m=5;

NU:=rhs(%%):MM:=rhs(%%):

zm[1]:=1/48*X*Z*(10*X^3-6*X*Y-5)/Y^2:

zm[2]:=1/46080*X*Z*(7000*X^9*Z^2-6000*X^7*Y*Z^2+44200*X^9+1080*X^5*Y^2
*Z^2-78560*X^7*Y-4000*X^6*Z^2+37032*X^5*Y^2+1200*X^4*Y*Z^2-2640*X^3
*Y^3-1000*X^4*Y+250*X^3*Z^2+600*X^2*Y^2+250*X*Y-5525)/Y^5:

zm[3]:=8125/41472*Z*(-89451/20800+(Z^4+663/50*Z^2+89451/650)*X^15-396
/325*Y*(Z^4+86491/3960*Z^2+44615/132)*X^13+(-21/26*Z^4-51/10*Z^2)
*X^12+729/1625*Y^2*(Z^4+10471/270*Z^2+7233983/7290)*X^11+36/65*Y*(Z^4
+15187/1200*Z^2-221/240)*X^10+((-378/8125*Z^4-323577/81250*Z^2
-666153/3250)*Y^3+21/130*Z^4)*X^9-243/3250*Y^2*(Z^4+1418/45*Z^2-982
/81)*X^8+((1458/8125*Z^2+9967059/284375)*Y^4+(-9/260*Z^4+51/520*Z^2)
*Y)*X^7+((549/6500*Z^2-13887/32500)*Y^3-1/260*Z^4-51/100*Z^2)*X^6
+(-83673/81250*Y^5+(-81/2600*Z^2+3/520)*Y^2)*X^5+(-51/400*Y+3853
/26000*Y*Z^2+99/3250*Y^4)*X^4+(-9/2600*Y^3+51/800*Z^2)*X^3+491/6500
*X^2*Y^2+51/800*X*Y)*X/Y^8:

zm[4]:=154375/497664*(-4153782141/88524800+(4153782141/691600+Z^6
+102/5*Z^4+1935543/5200*Z^2)*X^21-3927/2470*Y*(Z^6+312353/10472*Z^4
+1446376/1925*Z^2+1037639059/65450)*X^19-20/19*(Z^4+663/50*Z^2+89451
/650)*Z^2*X^18+2187/2470*Y^2*(Z^6+329411/7290*Z^4+242301677/145800
*Z^2+34937103977/729000)*X^17+1386/1235*Y*(Z^6+8296/385*Z^4+1240601
/3696*Z^2-9939/1232)*X^16+((-189/950*Z^6-728253/49400*Z^4-667652763
/771875*Z^2-448855328599/12350000)*Y^3+105/304*Z^6+663/304*Z^4)*X^15
-2187/6175*Y^2*(Z^6+91489/2430*Z^4+3167053/3240*Z^2-223075/2916)
*X^14+891/61750*((Z^6+248551/1650*Z^4+11657643869/693000*Z^2
+67939477043/59400)*Y^3-125/9*(Z^4+52769/4400*Z^2-9061/2640)*Z^2)*Y
*X^13+((189/6175*Z^6+76338/30875*Z^4+32459499/247000*Z^2-7233983
/247000)*Y^3-35/988*Z^6-357/608*Z^4-11271/3040*Z^2)*X^12-187191
/2470000*((Z^4+29145024/80885*Z^2+178654891946/3639825)*Y^3-675
/2311*Z^6-166430/20799*Z^4+526975/41598*Z^2-27625/124794)*Y^2*X^11
-4887/61750*Y*((Z^4+16964203/76020*Z^2-370085/2172)*Y^3-125/1629*Z^6
-30275/6516*Z^4-3356327/52128*Z^2+244205/52128)*X^10+((7413417
/12350000*Z^2+28709041167/86450000)*Y^6+(-1413/197600*Z^4+13077
/39520*Z^2-1473/49400)*Y^3+7/15808*Z^6+357/1520*Z^4)*X^9+229149
/617500*Y^2*((Z^2-27011/4347)*Y^3-76505/611064*Z^4-2832277/611064
*Z^2+542555/305532)*X^8+(-448763193/86450000*Y^7+(-2637/197600*Z^2
+13887/988000)*Y^4+(-15687/316160*Z^4+867/6080*Z^2)*Y)*X^7+(83673
/1235000*Y^6+(125979/1976000*Z^2-3070277/9880000)*Y^3-51/6080*Z^4
-89451/39520*Z^2)*X^6-99/98800*(Y^3+3953/88*Z^2-1105/132)*Y^2*X^5
+(10977/494000*Y^4+(55623/83200*Z^2-89451/158080)*Y)*X^4+(-7931
/1580800*Y^3+1935543/6323200*Z^2)*X^3+533391/1580800*X^2*Y^2+1935543
/6323200*X*Y)*Z*X/Y^11:

AiryAiZeros(MM): evalf(%): Am:=%:

NU^(-2/3)*Am: YY:=evalf(%):

sqrt(Z^2-1)-arcsec(Z)=2/3/NU*(-Am)^(3/2): evalf(%):
ZZ:=fsolve(%,Z=max(2/3*(-YY)^(3/2),1)...2/3*(-YY)^(3/2)+1+Pi/2):

XX:=sqrt(YY/(1-ZZ^2)):

'a'[m]=Am; sigma[m,0]=XX; zeta[m,0]=YY; 'z'[m,0]=ZZ;

Z+sum(zm[s]/nu^(2*s),s=1...4): subs(nu=NU,X=XX,Y=YY,Z=ZZ,%):
'z'[4](nu,m)=%;
\end{verbatim}

\vspace{12pt}

The next Maple worksheet computes the approximation $\tilde{z}_{4}(\nu,m)$ to $\tilde{z}_{m}$ as given by (\ref{eq163}) with $z$ replaced by $\tilde{z}$. In this we have $zt[j]=\tilde{z}_{m,j}$, ($j=1,2,3,4$), $U=\tilde{\sigma}_{m,0}$, $V=\tilde{\zeta}_{m,0}$, $W=\tilde{z}_{m,0}$ and $Amp={\mathrm{a}}^{\prime}_{m}$.

\vspace{12pt}

\begin{verbatim}
# Input Digits, nu and m

Digits:=20;
nu=10;
m=5;

NU:=rhs(%%):MM:=rhs(%%):

zt[1]:=-1/48*U*W*(14*U^3-18*U*V-7)/V^2:

zt[2]:=1/46080*U*W*(13720*U^9*W^2-25200*U^7*V*W^2-58520*U^9+9720*U^5
*V^2*W^2+115360*U^7*V-7840*U^6*W^2-67608*U^5*V^2+5040*U^4*V*W^2+10800
*U^3*V^3-1960*U^4*V+490*U^3*W^2+2520*U^2*V^2+490*U*V+7315)/V^5:

zt[3]:=-22295/41472*(-1910331/1019200+(W^4-627/70*W^2+1910331/31850)
*U^15-1188/455*V*(W^4-157637/16632*W^2+196279/2772)*U^13+(-21/26*W^4
+627/182*W^2)*U^12+6561/3185*V^2*(W^4-9067/810*W^2+9294661/91854)*U^11
+108/91*V*(W^4-3443/720*W^2+209/720)*U^10+((-1458/3185*W^4+37341/4550
*W^2-22849731/222950)*V^3+21/130*W^4)*U^9-2187/6370*V^2*(W^4-1114/135
*W^2+1442/729)*U^8+((-1458/1715*W^2+15418917/780325)*V^4+(-27/364*W^4
+51/520*W^2)*V)*U^7+((-5427/12740*W^2+25353/63700)*V^3-1/260*W^4+627
/1820*W^2)*U^6+(-182979/222950*V^5+(-243/3640*W^2+3/520)*V^2)*U^5
+(-887/3920*V*W^2-81/1274*V^4+627/7280*V)*U^4+(-27/3640*V^3-627/14560
*W^2)*U^3-1423/12740*U^2*V^2-627/14560*U*V)*W*U/V^8:

zt[4]:=593047/497664*(2389852107/173508608+(-2389852107/1355536+W^6
-1254/91*W^4+8296539/50960*W^2)*U^21-1683/494*V*(W^6-453125/31416*W^4
+178487228/962115*W^2-4232928779/1924230)*U^19+(-20/19*W^6+66/7*W^4
-3820662/60515*W^2)*U^18+98415/24206*V^2*(W^6-2438057/153090*W^4
+427411399/1837080*W^2-28968143237/9185400)*U^17+594/247*V*(W^6-2221
/231*W^4+793267/11088*W^2-70753/43120)*U^16+((-3645/1862*W^6+26027325
/677768*W^4-2018197431/2965235*W^2+532087191449/47443760)*V^3+105/304
*W^6-165/112*W^4)*U^15-19683/12103*V^2*(W^6-85453/7290*W^4+4253567
/40824*W^2-196279/26244)*U^14+360855/1186094*((W^6-159683/4950*W^4
+694054643/891000*W^2-9207633077/534600)*V^3-343/243*(W^4-13441/2640
*W^2+779/720)*W^2)*V*U^13+((-73548/12103*W^4+243920277/3388840*W^2
+3645/12103*W^6-9294661/677768)*V^3-35/988*W^6+165/416*W^4-34485/20384
*W^2)*U^12+7512345/9488752*((W^4-5208032/120225*W^2+3605166538
/2318625)*V^3+147/1145*W^6-1816822/1391175*W^4+1604897/1669410*W^2
-71687/5008230)*V^2*U^11+101331/169442*((W^4-3520081/175140*W^2
+2538859/225180)*V^3+245/11259*W^6-139559/135108*W^4+5037109/1080864
*W^2-305767/1080864)*V*U^10+((8041599/6777680*W^2-3002408613/25546640)
*V^6+(59859/387296*W^4-143775/387296*W^2+309/13832)*V^3+7/15808*W^6
-33/208*W^4)*U^9+833247/1694420*V^2*((W^2-190357/72009)*V^3+279415
/740664*W^4-6318887/2221992*W^2+753445/1110996)*U^8+(706289463
/332106320*V^7+(20331/387296*W^2-25353/1936480)*V^4+(227925/3098368
*W^4-561/5824*W^2)*V)*U^7+(182979/3388840*V^6+(1161585/5422144*W^2
-5302207/27110720)*V^3+33/5824*W^4-1910331/1936480*W^2)*U^6+405/193648
*V^2*(V^3+2267/72*W^2-1463/540)*U^5+42543/1355536*V*(V^3+23179997
/1134480*W^2-1485813/189080)*U^4+(22621/3098368*V^3+8296539/61967360
*W^2)*U^3+34539573/108442880*U^2*V^2+8296539/61967360*U*V)*W*U/V^11:

-1/8*3^(2/3)*8^(1/3)*(Pi*(4*MM - 1))^(2/3)
*(1 + 20/(27*Pi^2*(4*MM - 1)^2))
..... -1/8*3^(2/3)*8^(1/3)*(Pi*max(0, 4*MM - 5))^(2/3):
evalf(%): fsolve(AiryAi(1,x)=0,x=%): Amp:=%:

NU^(-2/3)*Amp: VV:=evalf(%):

sqrt(W^2-1)-arcsec(W)=2/3/NU*(-Amp)^(3/2): evalf(%):
WW:=fsolve(%,W=max(2/3*(-VV)^(3/2),1)...2/3*(-VV)^(3/2)+1+Pi/2):

UU:=sqrt(VV/(1-WW^2)):

`#msup(mi("a"),mi("&prime;")))`[m]=Amp;
`#mover(mi("&sigma;"),msup(mi(""),mi("&tilde;")))`[m,0]=UU; 
`#mover(mi("&zeta;"),msup(mi(""),mi("&tilde;")))`[m,0]=VV; 
`#mover(mi(z),msup(mi(""),mi("&tilde;")))`[m,0]=WW; 

W+sum(zt[s]/nu^(2*s),s=1...4): subs(nu=NU,U=UU,V=VV,W=WW,%):
`#mover(mi(z),msup(mi(""),mi("&tilde;")))`[4](nu,m)=%;
\end{verbatim}

\section*{Acknowledgments}
I thank the anonymous referees for many helpful comments.
Financial support from Ministerio de Ciencia e Innovación project PID2021-127252NB-I00 (MCIN/AEI/10.13039/ 501100011033/FEDER, UE) is acknowledged.

\newpage

\bibliographystyle{siamplain}
\bibliography{biblio}
\end{document}


\maketitle

\section{A detailed example}

Here we include some equations and theorem-like environments to show
how these are labeled in a supplement and can be referenced from the
main text.
Consider the following equation:
\begin{equation}
  \label{eq:suppa}
  a^2 + b^2 = c^2.
\end{equation}
You can also reference equations such as \cref{eq:matrices,eq:bb} 
from the main article in this supplement.

\lipsum[100-101]

\begin{theorem}
  An example theorem.
\end{theorem}

\lipsum[102]
 
\begin{lemma}
  An example lemma.
\end{lemma}

\lipsum[103-105]

Here is an example citation: \cite{KoMa14}.

\section[Proof of Thm]{Proof of \cref{thm:bigthm}}
\label{sec:proof}
\lipsum[106-112]

\section{Additional experimental results}
\Cref{tab:foo} shows additional
supporting evidence. 

\begin{table}[htbp]
{\footnotesize
  \caption{Example table}  \label{tab:foo}
\begin{center}
  \begin{tabular}{|c|c|c|} \hline
   Species & \bf Mean & \bf Std.~Dev. \\ \hline
    1 & 3.4 & 1.2 \\
    2 & 5.4 & 0.6 \\ \hline
  \end{tabular}
\end{center}
}
\end{table}

\bibliographystyle{siamplain}
\bibliography{references}